\newif\ifpdf
\numberwithin{equation}{section}       
 \theoremstyle{plain}
 \newtheorem{thm}{Theorem}[section]
 \numberwithin{equation}{section} 
 \numberwithin{figure}{section} 
 \theoremstyle{plain}
 \theoremstyle{plain}
 \newtheorem{cor}[thm]{Corollary} 
 \theoremstyle{plain}
 \newtheorem{prop}[thm]{Proposition} 
 \theoremstyle{plain}
 \newtheorem{lem}[thm]{Lemma} 
 \theoremstyle{remark}
 \newtheorem{rem}[thm]{Remark}
 \theoremstyle{definition}
\theoremstyle{plain}
\newtheorem*{thmA}{Theorem A}
\theoremstyle{plain}
\theoremstyle{plain}
\newtheorem*{thmB}{Theorem B}
\theoremstyle{plain}
\theoremstyle{plain}
\theoremstyle{plain}
\theoremstyle{plain}
\theoremstyle{plain}
\theoremstyle{plain}
\newtheorem{defi}[thm]{Definition}
\newtheorem*{ackn}{Acknowledgements}
\newcommand{\A}{{\mathbb{A}}}
\newcommand{\B}{{\mathbb{B}}}
\newcommand{\C}{{\mathbb{C}}}
\newcommand{\N}{{\mathbb{N}}}
\newcommand{\PP}{{\mathbb{P}}}
\newcommand{\Q}{{\mathbb{Q}}}
\newcommand{\R}{{\mathbb{R}}}
\newcommand{\Z}{{\mathbb{Z}}}
\newcommand{\cA}{{\mathcal{A}}}
\newcommand{\cE}{{\mathcal{E}}}
\newcommand{\cF}{{\mathcal{F}}}
\newcommand{\cG}{{\mathcal{G}}}
\newcommand{\cL}{{\mathcal{L}}}
\newcommand{\cO}{{\mathcal{O}}}
\newcommand{\cV}{{\mathcal{V}}}
\newcommand{\cX}{{\mathcal{X}}}
\renewcommand{\a}{\alpha}
\newcommand{\de}{\delta}
\newcommand{\e}{\varepsilon}
\newcommand{\ord}{\operatorname{ord}}
\newcommand{\vol}{\operatorname{vol}}
\newcommand{\Spec}{\operatorname{Spec}}
\newcommand{\hdim}{\operatorname{\widehat{\dim}}}
\newcommand{\mas}{\mathrm{mass}}
\newcommand{\hvol}{\operatorname{\widehat{\vol}}}
\newcommand{\hdeg}{\operatorname{\widehat{\deg}}}
\newcommand{\bu}{\bullet}
\newcommand{\ma}{\mathrm{max}}
\newcommand{\mi}{\mathrm{min}}
\newcommand{\hd}{\widehat{\Delta}}
\newcommand{\hh}{\widehat{h}^0}
\newcommand{\al}{\mathrm{alg}}
\begin{document}

\setcounter{tocdepth}{1}

\title{Okounkov bodies of filtered linear series}

\date{\today}

\author{S{\'e}bastien Boucksom}

\address{CNRS, Institut de Math{\'e}matiques de Jussieu}

\email{boucksom@math.jussieu.fr}

\author{Huayi Chen}

\address{Universit{\'e} Paris Diderot -- Paris 7, Institut de Math{\'e}matiques de Jussieu}

\email{chenhuayi@math.jussieu.fr}

\begin{abstract} We associate to certain filtrations of a graded linear series of a big line bundle a concave function on the Okounkov body whose law with respect to Lebesgue's measure describes the asymptotic distribution of the jumps of the filtration. As a consequence we obtain a Fujita-type approximation theorem in this general filtered setting. We then specialize these results to the filtrations by minima in the usual context of Arakelov geometry, thereby obtaining in a simple way a natural construction of an arithmetic Okounkov body, the existence of the arithmetic volume as a limit and the arithmetic Fujita approximation theorem for adelically normed graded linear series.
\end{abstract}

\maketitle

\tableofcontents

\section*{Introduction}
\subsection{Okounkov bodies}
Let $X$ be an $n$-dimensional projective variety defined over an arbitrary field $K$ and let $L$ be a \emph{big} line bundle on $X/K$. Its \emph{Okounkov body}  $\Delta(L)\subset\R^n$ is a compact convex set designed to study the asymptotic behavior of the space of global sections $H^0(kL)$ as $k\to+\infty$ by generalizing to some extent the usual picture in toric geometry. Okounkov bodies were introduced and studied by Lazarsfeld and Musta\c{t}\v{a} in~\cite{LM08} and independently by Kaveh and Khovanskii in~\cite{KK08} (see also~\cite{KK09}) building on ideas of Okounkov~\cite{Ok96} (himself relying on former results of Khovanskii~\cite{Kho93}). They have the crucial property that
$$
\vol\Delta(L)=\lim_{k\to\infty}\frac{\dim_K H^0(kL)}{k^n}.
$$
Note that such a statement implicitely contains the \emph{existence} of the right-hand limit, a basic birational-geometric invariant of the big line bundle $L$ known (after multiplication by $n!$) as its \emph{volume}  $\vol(L)$.

It was more generally shown in~\cite{LM08} that one can attach to any \emph{graded linear series} $V_\bu$ of $L$ (i.e.~graded subalgebra of $R(L):=\bigoplus_{k\ge 0} H^0(kL)$) a convex body $\Delta(V_\bu)$ such that
$$
\vol\Delta(V_\bu)=\lim_{k\to\infty}\frac{\dim_K V_k}{k^n}
$$
as soon as $V_\bu$ \emph{contains an ample series} (cf.~Definition~\ref{defi:contample} below). The right-hand side is here again known (after multiplication by $n!$) as the \emph{volume}Ê $\vol(V_\bu)$ of $V_\bu$, and a general version of Fujita's approximation theorem in this setting was also obtained in~\cite{LM08}, to the effect that the volume of $V_\bu$ can be approximated by that of its \emph{finitely generated} graded subseries.

\subsection{Arakelov-geometric analogues}
Assume now that $K$ is a number field, and consider the following \emph{Arakelov-geometric} setting: let $\cX$ be a flat projective model of $X$ over the ring integers $\cO_K$, $\cL$ be a model of $L$ on $\cX$ and assume also given a conjugation-invariant continuous Hermitian metric on $L_\C$ over $X(\C)$, the whole data being summarized as $\overline L$. Given a finite set $S$ we set
$$
\hdim_K S:=\frac{1}{[K:\Q]}\log\sharp S.
$$
One then replaces $H^0(kL)$ with the finite set of \emph{small sections}
$$
\widehat{H}^0(k\overline L):=\{s\in H^0(\cX,k\cL),\,\sup_{X(\C)}|s|\le 1\},
$$
and $\dim_K H^0(kL)$ with $\hdim_K\widehat{H}^0(k\overline L)$, and it is a basic problem to construct the analogue of Okounkov bodies in this arithmetic setting (cf.~for instance~\cite{LM08} Question 7.7 P.51). This was largely accomplished by X.Yuan in~\cite{Yua08}, at least in the case of complete linear series. Indeed Yuan was able to construct a family of compact convex sets $\Delta_p(\overline L)\subset\R^{n+1}$ indexed by an infinite family of prime numbers $p$ in such a way that
$$\lim_{p\to\infty}\vol(\Delta_p(\overline L))\log p=\limsup_{k\to\infty}\frac{\hdim_K\widehat{H}^0(k\overline L)}{k^{n+1}}.
$$
The right-hand side (multiplied by $(n+1)!$) is known as the \emph{arithmetic volume} $\hvol(\overline L)$ of $\overline L$ (cf.~\cite{Mor09}), and Yuan inferred from the above result a Fujita-type approximation theorem for $\hvol(\overline L)$. Such a result reduces for many purposes the study of arithmetic volumes  to the \emph{arithmetically ample}  case, which has been well-understood since the work of Gillet-Soul\'e~\cite{GS92} (see also~\cite{AB95}) since it can be expressed as an arithmetic intersection number.

A Fujita-type approximation theorem was independently obtained by the second-named author in~\cite{Che08b} using a different approach.

\subsection{The general filtered setting}
Our goal in the present article is to unify in a natural and elementary way the above results. Let $V_\bu$ be as above a graded linear series of $L$ containing an ample series and consider the finite set
$$
\widehat{V}_k:=V_k\cap\widehat{H}^0(k\overline L).
$$
of small sections in $V_k$.
In a nutshell we will construct a (single) compact convex set $\hd(\overline V_\bu)\subset\R^{n+1}$
such that
\begin{equation}\label{equ:volhd}
\vol\hd(\overline V_\bu)=\lim_{k\to\infty}\frac{\hdim_K\widehat{V}_k}{k^{n+1}}
\end{equation}
and show that such an arithmetic volume can be approximated by that of finitely generated graded subseries.

More specifically, consider the \emph{filtration by minima}Ê $\cF$ of each $K$-vector space $V_k$, defined by letting for each $t\in\R$
$$
\cF_t V_k:=\text{Vect}_K\{s\in V_k\cap H^0(\cX,k\cL),\,\sup_{X(\C)}|s|\le e^{-t}\}.
$$
The \emph{jumping values}
$$
e_j(V_k,\cF):=\sup\{t\in\R,\,\dim\cF_t V_k\ge j\}\,\,(j=1,...,\dim V_k)
$$
of this filtration are then essentially equal to the classical \emph{successive minima}Ê of $\overline V_k$ and it thus follows from Gillet-Soul\'e's work~\cite{GS91} that
\begin{equation}\label{equ:gsintro}
\sum_{\begin{subarray}{c}1\leqslant j\leqslant\dim V_k\\e_j(V_k,\cF)>0\end{subarray}} e_j(V_k,\cF)=\hdim_K\widehat{V}_k+o(k^{n+1})
\end{equation}
On the other hand the filtration induced by $\cF$ on $V_\bu$ is  \emph{multiplicative} in the sense that
$$
(\cF_t V_k)(\cF_s V_m)\subset\cF_{t+s}V_{k+m},
$$
it is \emph{pointwise left-bounded}, i.e.~for each $k$ we have
$$
\cF_{-t} V_k=V_k\text{ for }t\gg 1
$$
and \emph{linearly right-bounded} in the sense that there exists $C>0$ such that
 $$
 \cF_{t} V_k=0\text{ for }t\ge Ck
 $$
(cf.~Lemma~\ref{lem:linbded}). We can then consider the Okounkov body
$$
\Delta(V_\bu^t)\subset\Delta(V_\bu)
$$
of the graded subseries $V_\bu^t$ defined by $V_k^t:=\cF_{kt}V_k$ and introduce
$$
G_{(V_\bu,\cF)}:\Delta(V_\bu)\to[-\infty,C]
$$
as the incidence function
$$
G_{(V_\bu,\cF)}(x):=\sup\{t\in\R,\,x\in\Delta(V_\bu^t)\}
$$
of the filtration $t\mapsto\Delta(V_\bu^t)$. We show that $G_{(V_\bu,\cF)}$ is concave, upper semicontinuous, and finite-valued on the interior $\Delta(V_\bu)^\circ$. Our main result is then the following.

\begin{thmA} Let $K$ be an arbitray field, $X$ be a projective $K$-variety and $L$ be a big line bundle on $X/K$. Let $V_\bu$ be a graded linear series of $L$ containing an ample series and let $\cF$ be a non-decreasing multiplicative $\R$-filtration of $V_\bu$ which is furthermore pointwise left-bounded and linearly right-bounded. Then the scaled jumping values $k^{-1}e_j(V_k,\cF)$ of the restriction of $\cF$ to $V_k$ equidistribute as $k\to\infty$ according to the law of $G_{(V_\bu,\cF)}$ with respect to Lebesgue measure.
\end{thmA}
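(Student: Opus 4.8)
The plan is to recast the assertion as the vague convergence of the empirical measures
$$
\nu_k:=\frac{1}{k^n}\sum_{j=1}^{\dim V_k}\delta_{k^{-1}e_j(V_k,\cF)}
$$
to the pushforward $G_*(\vol|_{\Delta(V_\bu)})$ of Lebesgue measure by $G:=G_{(V_\bu,\cF)}$, and then to upgrade vague convergence to weak convergence by conservation of mass. The bridge to Okounkov bodies is the elementary identity $\#\{j:\,k^{-1}e_j(V_k,\cF)\ge t\}=\dim\cF_{kt}V_k=\dim V^t_k$, valid for all but countably many $t$. Testing $\nu_k$ against $f\in C^\infty_c(\R)$ and summing by parts therefore gives
$$
\int_\R f\,d\nu_k=\int_\R f'(t)\,\frac{\dim V^t_k}{k^n}\,dt,
$$
so the problem reduces to the asymptotics of the functions $t\mapsto k^{-n}\dim V^t_k$: these are non-increasing in $t$, bounded above by $\sup_k k^{-n}\dim V_k<\infty$, and, by linear right-boundedness, supported in $\{t\le C\}$.

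The core point is that $k^{-n}\dim V^t_k\to\vol\Delta(V_\bu^t)$ for a.e.\ $t$. Granting it, dominated convergence (bounded integrand, compactly supported $f'$) gives $\int_\R f\,d\nu_k\to\int_\R f'(t)\,\vol\Delta(V_\bu^t)\,dt$; since $t\mapsto\Delta(V_\bu^t)\subseteq\Delta(V_\bu)$ is a non-increasing family of convex bodies whose incidence function is $G$, one has $\vol\Delta(V_\bu^t)=\vol\{x\in\Delta(V_\bu):G(x)\ge t\}$ for a.e.\ $t$, and Fubini rewrites the limit as $\int_{\Delta(V_\bu)}f(G(x))\,dx$, using that $G$ is finite on the full-measure set $\Delta(V_\bu)^\circ$. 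This is vague convergence $\nu_k\to G_*(\vol|_{\Delta(V_\bu)})$. To close, $\nu_k(\R)=k^{-n}\dim V_k\to\vol\Delta(V_\bu)$ by the Lazarsfeld--Musta\c{t}\v{a} volume theorem applied to $V_\bu$ itself; since no mass can escape to $+\infty$ (right-boundedness) and, by the coincidence of total masses, none escapes to $-\infty$, the vague convergence becomes the asserted equidistribution.

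The real obstacle is the core point, and this is where multiplicativity and boundedness of $\cF$ genuinely enter. When $\Delta(V_\bu^t)$ is not full-dimensional one has $k^{-n}\dim V^t_k\to 0=\vol\Delta(V_\bu^t)$ by a crude count of points in a thin graded semigroup, so the issue is the range where $\Delta(V_\bu^t)$ is full-dimensional; there the natural tool is again the Lazarsfeld--Musta\c{t}\v{a} volume theorem, now applied to $V_\bu^t$, which requires $V_\bu^t$ to contain an ample series. For $t$ sufficiently negative this is elementary: if $W_\bu\subseteq V_\bu$ is a finitely generated ample subseries with generators $s_i\in W_{m_i}$, then pointwise left-boundedness gives $s_i\in\cF_{-c_i}V_{m_i}$ with $c_i<\infty$, and multiplicativity lands every monomial $\prod_i s_i^{a_i}$, of total degree $k=\sum_i a_im_i$, in $\cF_{-\sum_i a_ic_i}V_k\subseteq\cF_{-Ck}V_k=V^{-C}_k$ with $C:=\max_i c_i/m_i$; hence $W_\bu\subseteq V_\bu^{-C}\subseteq V_\bu^t$ for every $t\le -C$. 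Propagating this positivity across the whole full-dimensional range of $t$ — that is, interpolating between $t\ll 0$ and the degenerate range, using the monotonicity in $t$ together with the concavity and upper semicontinuity of $G$ — is the delicate step; the summation by parts, the convex-geometric identification of $\vol\Delta(V_\bu^t)$, and the surrounding measure-theoretic bookkeeping are all routine.
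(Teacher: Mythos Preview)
Your overall architecture coincides with the paper's: both arguments pass from the empirical measures to their complementary distribution functions $h_k(t)=k^{-n}\dim V_k^t$, show these converge a.e.\ to $\vol\Delta(V_\bu^t)$, and then take distributional derivatives via dominated convergence. The summation by parts, the identification $\vol\Delta(V_\bu^t)=\vol\{G\ge t\}$ for a.e.\ $t$, the Fubini step, and the upgrade from vague to weak convergence by conservation of mass are all correct.

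The gap is exactly where you flag it, and your proposed resolution does not work. ``Interpolating between $t\ll 0$ and the degenerate range, using the monotonicity in $t$ together with the concavity and upper semicontinuity of $G$'' cannot close the argument: those properties of $G$ are consequences of the convexity relation $t\Delta(V_\bu^{s_1})+(1-t)\Delta(V_\bu^{s_2})\subset\Delta(V_\bu^{ts_1+(1-t)s_2})$ among the Okounkov bodies, and say nothing about whether $k^{-n}\dim V_k^t$ actually converges to $\vol\Delta(V_\bu^t)$. That convergence is a statement about the semigroup $\Gamma(V_\bu^t)$ generating $\Z^{n+1}$, and no amount of information about the limit object $G$ will produce it. Monotonicity in $t$ only gives you the trivial sandwiches $h_k(t_2)\le h_k(t)\le h_k(t_1)$ and $g(t_2)\le g(t)\le g(t_1)$, with no comparison across the two families in the middle.

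What is actually needed is an \emph{algebraic} boost showing directly that $V_\bu^t$ contains an ample series for every $t<e_{\max}(V_\bu,\cF)$, not just for $t\le -C$. You already have an ample $W_\bu$ sitting in $V_\bu^a$ for some $a\ll 0$, realized as the image of $H^0(kA)\to V_{km}$ under multiplication by $s^k$ for a fixed $s\in H^0(mL-A)$. Given $t$ with $t+\e<e_{\max}$, choose for each large $p$ a nonzero $v_p\in\cF_{p(t+\e)}V_p$ (which exists by the super-additivity of $k\mapsto e_{\max}(V_k,\cF)$). Then multiplication by $(sv_p)^k$ instead of $s^k$ sends $H^0(kA)$ into $\cF_{kma+kp(t+\e)}V_{k(m+p)}=V_{k(m+p)}^{s_p}$ with $s_p=\frac{am+(t+\e)p}{m+p}$; for $p$ large enough $s_p>t$, so the image lies in $V_\bu^t$. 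This is Lemma~\ref{lem:contample} in the paper, and it is the genuine content you are missing; once it is in place, your write-up goes through verbatim.
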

In other words this result reads
$$
\lim_{k\to\infty}k^{-n}\sum_j f\left(k^{-1}e_j(V_k,\cF)\right)=\int_{\Delta(V_\bu)^\circ}\left(f\circ G_{(V_\bu,\cF)}\right)d\lambda
$$
for any bounded continuous function $f$ on $\R$, with $\lambda$ denoting Lebesgue measure on $\R^n$. This result extends in particular the second-named author's result~\cite{Che08b} Proposition 4.6,  expressing furthermore the limit measure as the push-forward of $\lambda$ by $G_{(V_\bu,\cF)}$.

We next define the \emph{filtered Okounkov body} of $(V_\bu,\cF)$ as the half-epigraph
$$
\hd(V_\bu,\cF):=\{(x,t)\in\Delta(V_\bu)\times\R,\,0\le t\le G_{(V_\bu,\cF)}(x)\}
$$
and set in the Arakelov-geometric case $\hd(\overline V_\bu):=\hd(V_\bu,\cF)$ with $\cF$ standing for the filtration by minima. Gillet-Soul\'e's result (\ref{equ:gsintro}) combined with Theorem A will then easily be seen to imply (\ref{equ:volhd}).

As a consequence of Theorem A we will also obtain the following arithmetic analogue of Lazarsfeld-Musta\c{t}\v{a}'s Fujita-type approximation theorem.
\begin{thmB} Let $X,L$ and $\cF$ be as in Theorem A. For each $\e>0$ there exists a finitely generated subseries $W_\bu$ of $V_\bu$ such that
$$
\vol\hd(W_\bu,\cF)\ge\vol\hd(V_\bu,\cF)-\e.
$$
\end{thmB}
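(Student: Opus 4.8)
The plan is to combine a layer-cake description of $\vol\hd$ with the geometric Fujita approximation theorem of \cite{LM08}, applied to the truncated series $V_\bu^t$. Since $\hd(V_\bu,\cF)$ is by definition the half-epigraph of $G_{(V_\bu,\cF)}$ over $\Delta(V_\bu)$, Fubini gives
$$
\vol\hd(V_\bu,\cF)=\int_{\Delta(V_\bu)}\max\bigl(G_{(V_\bu,\cF)},0\bigr)\,d\lambda=\int_0^{+\infty}\lambda\bigl(\{G_{(V_\bu,\cF)}\ge t\}\bigr)\,dt .
$$
Because $\Delta(V_\bu^t)\subseteq\{G_{(V_\bu,\cF)}\ge t\}\subseteq\bigcap_{s<t}\Delta(V_\bu^s)$ and $t\mapsto\vol\Delta(V_\bu^t)$ is non-increasing, the functions $t\mapsto\vol\Delta(V_\bu^t)$ and $t\mapsto\lambda(\{G_{(V_\bu,\cF)}\ge t\})$ coincide outside a countable set, and both vanish for $t\ge C$ by linear right-boundedness; hence
$$
\vol\hd(V_\bu,\cF)=\int_0^{C}\vol\Delta(V_\bu^t)\,dt .
$$
The same identity holds with $V_\bu$ replaced by any graded linear series, in particular by any finitely generated subseries $W_\bu\subseteq V_\bu$ carrying the restricted filtration (which is again multiplicative, pointwise left-bounded and linearly right-bounded, with the same constant $C$). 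If $\vol\hd(V_\bu,\cF)=0$ there is nothing to prove, so assume it is positive; the integrand being non-increasing, $\vol\Delta(V_\bu^t)>0$ on some interval $[0,\tau)$.

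Fix $\e>0$. Choose $0=t_0<t_1<\dots<t_N=C$ with $t_1<\tau$ and mesh so small that the right-endpoint Riemann sum of the non-increasing integrand satisfies
$$
\sum_{i=1}^N(t_i-t_{i-1})\,\vol\Delta(V_\bu^{t_i})\ \ge\ \int_0^{C}\vol\Delta(V_\bu^t)\,dt-\frac{\e}{2}\ =\ \vol\hd(V_\bu,\cF)-\frac{\e}{2}.
$$
For each index $i$ with $\vol\Delta(V_\bu^{t_i})>0$ we apply the geometric Fujita approximation theorem of \cite{LM08} to the graded linear series $V_\bu^{t_i}$ --- the positivity of its Okounkov body together with the properties of $G_{(V_\bu,\cF)}$ established for Theorem A placing it within the scope of that theorem --- to obtain a \emph{finitely generated} subseries $W_\bu^{(i)}\subseteq V_\bu^{t_i}$, generated in degrees $\le d_i$, with
$$
\vol\Delta\bigl(W_\bu^{(i)}\bigr)\ \ge\ \vol\Delta(V_\bu^{t_i})-\frac{\e}{2C}.
$$
Let $W_\bu$ be the graded subalgebra of $V_\bu$ generated by the finitely many finite-dimensional spaces $W_k^{(i)}$, $k\le d_i$, over all such $i$; it is finitely generated and contains every $W_\bu^{(i)}$.

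It remains to bound $\vol\hd(W_\bu,\cF)$ from below. For each such $i$, every element of $W_k^{(i)}$ lies both in $W_k$ and in $\cF_{kt_i}V_k$, hence in $W_k\cap\cF_{kt_i}V_k=\cF_{kt_i}W_k=W_k^{t_i}$; by monotonicity of Okounkov bodies under inclusion of graded linear series, $\Delta(W_\bu^{(i)})\subseteq\Delta(W_\bu^{t_i})$, so $\vol\Delta(W_\bu^{t_i})\ge\vol\Delta(V_\bu^{t_i})-\frac{\e}{2C}$ (trivially so for the other $i$). Since $t\mapsto\vol\Delta(W_\bu^t)$ is again non-increasing,
$$
\vol\hd(W_\bu,\cF)=\int_0^{+\infty}\vol\Delta(W_\bu^t)\,dt\ \ge\ \sum_{i=1}^N(t_i-t_{i-1})\,\vol\Delta(W_\bu^{t_i})\ \ge\ \sum_{i=1}^N(t_i-t_{i-1})\,\vol\Delta(V_\bu^{t_i})-\frac{\e}{2},
$$
the last inequality using $\sum_{i}(t_i-t_{i-1})=C$. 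Combining with the choice of the partition, the right-hand side is $\ge\vol\hd(V_\bu,\cF)-\frac{\e}{2}-\frac{\e}{2}=\vol\hd(V_\bu,\cF)-\e$, which proves Theorem B.

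The one genuinely delicate step is the invocation of \cite{LM08}: one must know that the truncated series $V_\bu^{t_i}$ (for $t_i$ with $\vol\Delta(V_\bu^{t_i})>0$) falls within the hypotheses of the geometric Fujita theorem --- essentially, that it contains an ample series --- and that that theorem approximates the \emph{volume of the Okounkov body} $\vol\Delta(V_\bu^{t_i})$, rather than merely $\limsup_k k^{-n}\dim_K V_k^{t_i}$, by a finitely generated subseries. Both points rely on the description of the convex bodies $\Delta(V_\bu^t)$ and the properties of $G_{(V_\bu,\cF)}$ obtained in proving Theorem A, together with the convex-geometric core of Fujita approximation. Everything else --- the layer-cake identity, the Riemann-sum discretization, the inheritance of the filtration hypotheses by $W_\bu$, and the monotonicity of Okounkov bodies under inclusion --- is routine.
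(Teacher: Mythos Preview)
Your proof is correct and rests on the same two ingredients as the paper's: the layer-cake formula $\vol\hd(V_\bu,\cF)=\int_0^\infty\vol\Delta(V_\bu^t)\,dt$ (the paper's Corollary~\ref{cor:hdim}) and Lazarsfeld--Musta\c{t}\v{a}'s Fujita approximation applied to each $V_\bu^t$, which is legitimate because $V_\bu^t$ contains an ample series for $t<e_\ma(V_\bu,\cF)$ (Lemma~\ref{lem:contample}). The difference lies only in how you pass from ``approximation at each level $t$'' to ``approximation of the integral''.

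You discretize: pick finitely many levels $t_1,\dots,t_N$, approximate each $V_\bu^{t_i}$ separately by a finitely generated $W_\bu^{(i)}$, glue the generators into a single $W_\bu$, and control the error by a Riemann-sum argument. The paper avoids all of this by taking $W_\bu=S^\bu V_{\le p}$, the subalgebra generated by $V_1+\cdots+V_p$, and observing that for \emph{every} $t$ one has $(S^\bu V_{\le p})^t\supset S^\bu V_{\le p}^t$ (the latter being the LM08 approximation of $V_\bu^t$); hence $\vol\Delta((S^\bu V_{\le p})^t)\to\vol\Delta(V_\bu^t)$ pointwise in $t$, and dominated convergence on the bounded interval $[0,e_\ma]$ gives $\vol\hd(S^\bu V_{\le p},\cF)\to\vol\hd(V_\bu,\cF)$ directly. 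This is shorter and sidesteps the bookkeeping of the partition, the Riemann-sum error, and the gluing of the $W_\bu^{(i)}$. Your route has the minor virtue of making the finitely generated approximant more explicit, but it buys nothing essential here. One small point to tighten: your criterion ``$\vol\Delta(V_\bu^{t_i})>0$'' is not literally the hypothesis of Lemma~\ref{lem:contample}; you should simply choose the partition so that $t_i<e_\ma(V_\bu,\cF)$ whenever that term contributes, which is harmless.
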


\subsection{Relations to other works}
We have already mentioned above that Theorems A and B yield in particular simpler proofs of the main results of~\cite{Yua08,Che08b,Mor09}, and it is therefore conversely clear that these works have had a strong influence on the present article. The work of Witt-Nystr\" om~\cite{WN09} was also influential as far as the idea of constructing a concave function on the Okounkov body is concerned, even though the final outcome of our construction is not a Chebyshev-type function.

While the present work was being written X.Yuan has introduced in~\cite{Yua09} another kind of concave function $c[\overline L]$ on the Okounkov body of a line bundle endowed with Arakelov-geometric data. His construction is closer in spirit to that of~\cite{WN09}, since it consists in summing up the analogues of Witt-Nystr\"om's Chebyshev functions at all places of $K$. It is also closely related to Rumely-Lau-Varley's study of the \emph{sectional capacity}~\cite{RLV00}, and indeed Yuan's goal is to show that the mean-value of $c[\overline L]$ coincides with the quantity he denotes by $\vol_\chi(\overline L)$ and which is equal by definition to $-\log$ of the sectional capacity in~\cite{RLV00}.

We shall discuss in more details the relation between~\cite{Yua09} and the present work in Section~\ref{sec:yuan}. We will show in particular on an example that the two constructions do not coincide in general.

\subsection{Organization of the paper}
Let us briefly describe the structure of our article. 
\begin{itemize}
\item Section~\ref{sec:okounkov}  contains the necessary definitions and results on Okounkov bodies extracted from~\cite{LM08}, whereas Section~\ref{sec:filvec}Ê introduces some terminology related to filtrations. 

\item In Section~\ref{sec:conc} we define the concave function attached to a filtered graded linear series and prove our main results in this general setting. Theorem A corresponds to Theorem~\ref{thm:main} whereas Theorem B is Theorem~\ref{thm:fuj}.

\item We then show in Section~\ref{sec:arith} how to relate the Arakelov-geometric setting (and more generally the adelic setting) to the filtered case using filtrations by minima. 

\item In Section~\ref{sec:seccapa} we introduce a different filtration induced by arithmetic data, and show how to recover a slightly less general version of the main result of~\cite{RLV00} by applying our main result to this alternative filtration. 

\item The final Section~\ref{sec:yuan} discusses the relation between the present work and~\cite{RLV00,Yua09}.
\end{itemize}

\begin{ackn} The authors would like to thank Antoine Ducros, Charles Favre, Shu Kawaguchi, Vincent Maillot, Atsushi Moriwaki and David Witt-Nystr\"om for interesting conversations related to the present work. They are grateful to Xinyi Yuan for letter communications and helpful comments. The main result of the present article was presented by the second-named author at the R.I.M.S. Symposium on Hodge Theory and Algebraic Geometry. He would like to thank Atsushi Moriwaki and the organizers of the Symposium for the invitation.
\end{ackn}

\section{The concave transform of a filtered graded linear series}\label{sec:filter}
In this section $K$ denotes an arbitrary field.
\subsection{Okounkov bodies}\label{sec:okounkov}

The original idea of Okounkov bodies \cite{Ok96} was systematically developed in \cite{LM08} and independently in~\cite{KK08} in order to study the asymptotic behavior of graded linear series.

Let $X$ be a projective $K$-variety (i.e.~an integral projective $K$-scheme) of dimension $n$ and fix a system of parameters $z=(z_1,...,z_n)$ centered at a regular point $p\in X(\overline K)$ (with $\overline K$ denoting an algebraic closure of $K$). We then get a rank $n$ valuation
$$
\ord_z:\cO_X\setminus\{0\}\to\N^n
$$
centered at $p$ as follows: expand a given $f\in\cO_{X,p}$ as a formal power series
$$
f=\sum_{\a\in\N^n}a_\a z^\a
$$
and set
 $$
\ord_z(f)=\min_\mathrm{lex}\{\a\in\N^n,\,a_\a\neq 0\}
$$
where the min is taken with respect to the lexicographic order on $\N^n$. Note that $\nu$ depends on the choice of uniformizing parameters $z$ only \emph{via} the flag they induce in the tangent space at $p$ (compare~\cite{LM08} Section 5.2).

Given a line bundle $L$ on $X$ the function $\ord_z$ induces a valuation-like function on $H^0(L)\setminus\{0\}$ by composing it with the evaluation operator, with the basic property that the graded pieces have
\begin{equation}\label{equ:graded}
\dim\{s\in V,\,\ord_z(s)\ge_\mathrm{lex}\a\}/\{s\in V,\,\ord_z(s)>_\mathrm{lex}\a\}\le 1
\end{equation}
for each subspace $V\subset H^0(L)$. Indeed given $s_1,s_2$ with $\ord_z(s_j)=\a$, $i=1,2$, we have $s_j=c_j z^\a+\text{(higher order terms)}$ with $c_j\neq 0$ and it immediately follows that $s_1$ and $s_2$ are linearly dependent modulo $\{\ord_z>\a\}$ (see also~\cite{LM08} Lemma 1.3).

Now let $V_\bu$ be a \emph{graded linear series} of $L$, i.e. a graded $K$-subalgebra of
$$
R(L):=\bigoplus_k H^0(kL).
$$
One associates to $V_\bu$ the semigroup
$$
\Gamma(V_\bu):=\{(k,\a)\in\N^{n+1},\,\a=\ord_z(s)\text{ for some }s\in H^0(kL)\setminus\{0\}\}
$$
whose slice over $k$
$$
\Gamma_k:=\Gamma(V_\bu)\cap\left(\{k\}\times\N^n\right)
$$
has cardinality equal to $\dim V_k$ by (\ref{equ:graded}). The closed convex cone $\Sigma(V_\bu)\subset\R^{n+1}$ generated by $\Gamma(V_\bu)$ has a compact convex basis
$$
\Delta(V_\bu):=\Sigma(V_\bu)\cap\left(\{1\}\times\R^n\right)
$$
(cf.~\cite{LM08} P.18). By~\cite{LM08} Proposition 2.1 it is actually a convex body (i.e. has non-empty interior) and its Euclidian volume satisfies
\begin{equation}\label{equ:fuj}
\vol(\Delta(V_\bu))=\lim_{k\to\infty}k^{-n}\dim V_k
\end{equation}
as soon as $\Gamma(V_\bu)$ generates $\Z^{n+1}$ as a group. As shown in~\cite{LM08} Lemma 2.12 this is in particular the case when $V_\bu$ contains an ample series in the following sense:

\begin{defi}\label{defi:contample} Let $V_\bu\subset R(L)$ be a graded linear series of $L$. We will say that $V_\bu$ \emph{contains an ample series} if:
\begin{enumerate}
\item[(i)] $V_k\neq 0$ for all $k\gg 1$.
\item[(ii)] There exist a Kodaira-type decomposition $L=A+E$ into $\Q$-divisors with $A$ ample and $E$ effective such that
$$
H^0(kA)\subset V_k\subset H^0(kL)
$$
for all sufficiently large and divisible $k$.
\end{enumerate}
\end{defi}
This corresponds exactly to Condition (C) in~\cite{LM08} P.20, as explained in Remark 2.10 thereof.

\subsection{Filtered vector spaces and algebras}\label{sec:filvec}
By a filtration $\cF$ of a finite dimensional $K$-vector space $E$ we will always mean an $\R$-indexed non-increasing left-continuous filtration $t\mapsto\cF_t E$. Given a filtered vector space $(E,\cF)$ we set
$$
e_\mi(E,\cF):=\inf\{t\in\R,\,\cF_tE\neq E\}
$$
and
$$
e_\ma(E,\cF):=\sup\{t\in\R,\,\cF_t E\neq 0\}
$$
and we shall say that $\cF$ is left-bounded (resp.~right-bounded) if $e_\mi(E,\cF)>-\infty$ (resp.~$e_\ma(E,\cF)<+\infty$).

\begin{defi}\label{defi:jump} Let $\cF$ be a left and right-bounded filtration of an $N$-dimensional $K$-vector space $E$, $N\ge 1$.
\begin{itemize}
\item[(i)] The \emph{jumping values}
$$
e_\ma(E,\cF)=e_1(E,\cF)\ge...\ge e_N(E,\cF)=e_\mi(E,\cF)
$$
of $E$ are defined by
$$
e_j(E,\cF):=\sup\{t\in\R,\,\dim\cF_t E\ge j\}.
$$
\item[(ii)] The \emph{mass} of $(E,\cF)$ is defined by
$$
\mas(E,\cF):=\sum_{e_j(E,\cF)>0} e_j(E,\cF)
$$
(The sum over an empty set being equal to $0$ as usual).
\end{itemize}
\end{defi}
We thus have
$$
\dim\cF_t E=j\Longleftrightarrow t\in ]e_{j+1}(E,\cF),e_j(E,\cF)]
$$
(with $e_{N+1}(E,\cF)=-\infty$ and $e_0(E,\cF)=+\infty$ by convention). We note for future use that
\begin{equation}\label{equ:distrib}
\frac{d}{dt}\dim\cF_ tE=-\sum_{j=1}^N\de_{e_j(E,\cF)}
\end{equation}
holds in the sense of distributions.

In what follows a graded $K$-algebra $V_\bu=\bigoplus_{k\in\N} V_k$ will always be indexed by $\N$ and with finite dimensional pieces such that $V_0=K$.

\begin{defi} Let $\cF$ be an $\R$-indexed, non-decreasing, left-continuous filtration on a graded $K$-algebra $V_\bu$. We shall say that
\begin{itemize}
\item[(i)] $\cF$ is \emph{multiplicative} if
$$
(\cF_ t V_k)(\cF_s V_m)\subset\cF_{t+s} V_{k+m}
$$
holds for all $k,m\in\N$ and $s,t\in\R$.
\item[(iii)] $\cF$ is \emph{pointwise left-bounded (resp.~pointwise right-bounded)} if $(V_k,\cF)$ is left-bounded (resp.~right-bounded) for each $k$.
\item[(ii)] $\cF$ is \emph{linearly left-bounded (resp.~linearly right-bounded)} if there exists $C>0$ such that $e_\mi(V_k,\cF)\ge -Ck$ (resp.~$e_\ma(V_k,\cF)\le C k$).
\end{itemize}
\end{defi}

If $\cF$ is a multiplicative filtration on $V_\bu$ then setting
$$
V_k^t:=\cF_{kt}V_k
$$
defines a graded subalgebra $V_\bu^t:=\bigoplus_k V_k^t$
of $V_\bu$.

We introduce
\begin{equation}\label{equ:emin}
e_\mi(V_\bu,\cF):=\liminf_{k\to\infty}\frac{e_\mi(V_k,\cF)}{k}
\end{equation}
and

\begin{equation}\label{equ:emax}
e_\ma(V_\bu,\cF):=\limsup_{k\to\infty}\frac{e_\ma(V_k,\cF)}{k}
\end{equation}
Note that $\cF$ is linearly left (resp.~right)-bounded iff $e_\mi(V_\bu,\cF)$ (resp.~$e_\ma(V_\bu,\cF)$) is finite.

\begin{lem}\label{lem:subadd} Assume that the graded algebra $V_\bu$ is integral and satisfies $V_k\neq 0$ for all $k\gg 1$. Then we have
$$
e_\ma(V_\bu,\cF):=\lim_{k\to\infty}\frac{e_\ma(V_k,\cF)}{k}=\sup_{k\ge 1}\frac{e_\ma(V_k,\cF)}{k}.
$$
\end{lem}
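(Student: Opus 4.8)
The statement to prove is that under the hypothesis that $V_\bu$ is integral with $V_k \neq 0$ for $k \gg 1$, the sequence $a_k := e_\ma(V_k,\cF)$ satisfies $\lim_{k\to\infty} a_k/k = \sup_{k\ge 1} a_k/k$. The plan is to establish \emph{superadditivity} of $a_k$ (up to the finitely many indices where $V_k$ may vanish) and then invoke Fekete's subadditive lemma in its superadditive form.

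First I would verify the superadditivity inequality $e_\ma(V_{k+m},\cF) \ge e_\ma(V_k,\cF) + e_\ma(V_m,\cF)$ whenever $V_k \neq 0$ and $V_m \neq 0$. Pick $t < e_\ma(V_k,\cF)$ and $s < e_\ma(V_m,\cF)$; by definition of $e_\ma$, both $\cF_t V_k$ and $\cF_s V_m$ are nonzero $K$-subspaces. Choose nonzero $u \in \cF_t V_k$ and $v \in \cF_s V_m$. Since $V_\bu$ is an integral domain, the product $uv \in V_{k+m}$ is nonzero, and by multiplicativity of $\cF$ we have $uv \in \cF_{t+s} V_{k+m}$, so $\cF_{t+s} V_{k+m} \neq 0$ and hence $e_\ma(V_{k+m},\cF) \ge t+s$. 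Letting $t \uparrow e_\ma(V_k,\cF)$ and $s \uparrow e_\ma(V_m,\cF)$ gives the claimed inequality. Note this uses integrality in an essential way — it is the one place where the hypothesis is needed, and it is the only genuinely substantive step.

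Second, I would pass to a form where Fekete applies cleanly. Since $V_k \neq 0$ for all $k \ge k_0$ for some $k_0$, the sequence $(a_k)_{k \ge k_0}$ is superadditive on indices $\ge k_0$ (sums $k+m$ of two such indices are again $\ge k_0$). A standard truncation argument — e.g.\ replacing $a_k$ by $b_k := a_{k_0 k}/$something, or simply restricting attention to the sub-semigroup generated by $k_0, k_0+1, \dots, 2k_0-1$, which contains all sufficiently large integers — reduces to the clean superadditive Fekete lemma, yielding $\lim_{k\to\infty} a_k/k = \sup_{k \ge k_0} a_k/k$. Finally I would note that $\sup_{k\ge k_0} a_k/k = \sup_{k\ge 1} a_k/k$: for the indices $k < k_0$ with $V_k = 0$ one has $e_\ma(V_k,\cF) = \sup\emptyset = -\infty$ by the convention in Definition~\ref{defi:jump}, which cannot increase the supremum, and the remaining finitely many terms with $V_k \neq 0$ are already dominated by the sup over large $k$ by superadditivity ($a_k/k \le a_{Nk}/(Nk)$ for $N$ large once $Nk \ge k_0$). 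Hence the limit, the sup over $k \ge k_0$, and the sup over $k \ge 1$ all agree.

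The main obstacle — though it is mild — is the bookkeeping around the finitely many vanishing pieces $V_k = 0$, where $e_\ma$ is $-\infty$: one must make sure the Fekete argument is applied on the correct range and that these degenerate terms are harmless in the final supremum. Conceptually there is no difficulty; the entire content is the one-line multiplicativity-plus-integrality argument for superadditivity.
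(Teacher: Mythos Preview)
Your proposal is correct and follows exactly the paper's approach: the paper's proof simply states that $e_\ma(V_k,\cF)$ is super-additive for $k\gg 1$ under the standing hypotheses and then invokes Fekete's lemma, which is precisely what you do with the details filled in. Your handling of the finitely many degenerate indices is fine (note in particular that once $V_k\neq 0$, integrality gives $V_{Nk}\neq 0$ for all $N\ge 1$, so the iteration $a_{Nk}\ge N a_k$ goes through).
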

\begin{proof} It is easily checked under the standing assumptions that $e_\ma(V_k,\cF)$ is super-additive in $k$ for $k\gg 1$, and the result follows from a standard result sometimes known as ``Fekete's lemma''.
\end{proof}

\begin{lem}\label{lem:fingen} If the graded algebra $V_\bu$ is finitely generated then any multiplicative filtration on $V_\bu$ which is pointwise left-bounded is linearly left-bounded.
\end{lem}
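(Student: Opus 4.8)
The plan is to reduce the statement to the behaviour of $\cF$ on a fixed finite set of homogeneous algebra generators of $V_\bu$, and then propagate the resulting bound to every graded piece using multiplicativity. First I would choose homogeneous generators $x_1,\dots,x_r$ of the $K$-algebra $V_\bu$, of respective degrees $d_1,\dots,d_r\ge 1$; this is legitimate because $V_0=K$ contributes no generator in degree $0$. Pointwise left-boundedness says that for each $k$ one has $\cF_tV_k=V_k$ once $t\ll 0$, and applying this with $k=d_i$ yields, after enlarging them if necessary, constants $c_i>0$ with $x_i\in\cF_{-c_i}V_{d_i}$. I would then put
$$
C:=\max_{1\le i\le r}\frac{c_i}{d_i}>0
$$
and try to show that $e_\mi(V_k,\cF)\ge -Ck$ for all $k\ge 1$, which is precisely the assertion that $\cF$ is linearly left-bounded.

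For the core step, fix $k\ge 1$. Since the $x_i$ generate $V_\bu$ as a $K$-algebra and $V_0=K$, the space $V_k$ is spanned over $K$ by the monomials $x_1^{a_1}\cdots x_r^{a_r}$ with $(a_1,\dots,a_r)\in\N^r$ and $\sum_i a_id_i=k$. Iterating the multiplicativity inclusion $(\cF_sV_m)(\cF_tV_\ell)\subset\cF_{s+t}V_{m+\ell}$ — first to obtain $x_i^{a_i}\in\cF_{-a_ic_i}V_{a_id_i}$, then to multiply the factors together — shows that each such monomial lies in $\cF_{-\sigma}V_k$ with $\sigma:=\sum_i a_ic_i$; and since
$$
\sigma=\sum_i a_id_i\cdot\frac{c_i}{d_i}\le C\sum_i a_id_i=Ck
$$
while the subspaces $\cF_tV_k$ shrink as $t$ increases, we get $x_1^{a_1}\cdots x_r^{a_r}\in\cF_{-Ck}V_k$. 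Thus $\cF_{-Ck}V_k$ is a linear subspace of $V_k$ containing a spanning family, hence equals $V_k$, so $e_\mi(V_k,\cF)=\inf\{t:\cF_tV_k\ne V_k\}\ge -Ck$, as desired.

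I do not expect a genuine obstacle: once finite generation is available the argument is essentially bookkeeping. The two points I would be careful about are (i) choosing the generators in positive degrees so that the constant $C$ is well defined and strictly positive, and (ii) the finite induction on $\sum_i a_i$ needed to pass from the separately controlled powers $x_i^{a_i}$ to the full monomial by repeated use of multiplicativity. The value $k=0$ plays no role in the asymptotic notion of linear left-boundedness — indeed multiplicativity already forces $e_\mi(V_0,\cF)\le 0$ — so no separate treatment is required there.
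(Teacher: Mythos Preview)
Your proof is correct and follows essentially the same approach as the paper: the paper picks $p$ with $V_1+\dots+V_p$ generating and a single $t$ with $\cF_{jt}V_j=V_j$ for $j=1,\dots,p$, then uses multiplicativity to get $V_k\subset\cF_{kt}V_k$, which is exactly your argument with the bookkeeping organized slightly differently (a uniform $t$ rather than your $C=\max_i c_i/d_i$).
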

\begin{proof} Let $p$ such that $V_\bu$ is generated by $V_1+...+V_p$ and choose $t$ such that $\cF_{jt} V_j=V_j$ for $j=1,...,p$. Writing a given $x\in V_k$ as a homogeneous polynomial in elements of $V_1,...,V_p$ and using the mutliplicative property of $\cF$ it is then easily seen that $x$ lies in $\cF_{kt}V_k$.
\end{proof}

\subsection{The concave function of a filtered algebra}\label{sec:conc}
In what follows we fix a projective $K$-variety $X$ of dimension $n$, a set of uniformizing parameters $z$ at a given regular $\overline{K}$-point and a big bundle $L$ on $X$.

\begin{lem}\label{lem:contample} Let $\cF$ be a multiplicative filtration on a graded linear series $V_\bu$ of $L$ containing an ample series. For each $t<e_\ma(V_\bu,\cF)$ the graded linear series $V_\bu^t$ also contains an ample series.
\end{lem}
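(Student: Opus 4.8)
The goal is to show that for $t < e_{\max}(V_\bu,\cF)$ the graded subseries $V_\bu^t$ (defined by $V_k^t = \cF_{kt}V_k$) still contains an ample series in the sense of Definition~\ref{defi:contample}. The plan is to verify conditions (i) and (ii) separately, the first being essentially Lemma~\ref{lem:subadd} and the second requiring the bulk of the work.

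For (i), I would argue as follows. Since $V_\bu$ contains an ample series it is in particular an integral graded algebra with $V_k \neq 0$ for $k \gg 1$, so Lemma~\ref{lem:subadd} applies and gives $e_{\max}(V_\bu,\cF) = \sup_{k \ge 1} e_{\max}(V_k,\cF)/k$. Hence the strict inequality $t < e_{\max}(V_\bu,\cF)$ produces some $k_0 \ge 1$ with $t < e_{\max}(V_{k_0},\cF)/k_0$, i.e.\ $\cF_{k_0 t}V_{k_0} \neq 0$, that is $V_{k_0}^t \neq 0$. By super-additivity of $e_{\max}(V_k,\cF)$ (for $k \gg 1$, as in the proof of Lemma~\ref{lem:subadd}) one gets $e_{\max}(V_{mk_0},\cF) \ge m\, e_{\max}(V_{k_0},\cF) > mk_0 t$ for all $m$, so $V_{mk_0}^t \neq 0$ for all $m$; combined with the fact that a power of a nonzero section of a big line bundle in an integral algebra keeps all sufficiently large multiples nonzero (once we also know $V_k^t\ne0$ for one more index coprime considerations, or simply run the same Fekete argument to conclude $V_k^t\ne 0$ for all large $k$), this gives (i). The cleanest route is: $e_{\max}(V_\bu^t, \cdot)$-type reasoning shows $V_k^t \ne 0$ for all $k \gg 1$.

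For (ii), the subtle point, I would start from the Kodaira decomposition $L = A + E$ with $A$ ample, $E$ effective, and $H^0(kA) \subset V_k \subset H^0(kL)$ for $k$ large and divisible. The idea is to replace $A$ by a slightly smaller ample $\Q$-divisor $A'$ and absorb the loss into $E$: pick a rational $\delta > 0$ small enough that $A' := A - \delta D$ is still ample, where $D$ is an effective divisor with $E' := E + \delta D$ effective — one can even take $D$ general. Then I claim $H^0(kA') \subset V_k^t = \cF_{kt}V_k$ for $k$ large and divisible. To see this, note $H^0(kA') \subset H^0(kA) \subset V_k$, so the issue is purely the filtration level: I must show that every section in $H^0(kA')$ has $\cF$-level at least $kt$. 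Here is where I would use multiplicativity together with the choice of $k_0$ above: fix a nonzero $s_0 \in \cF_{k_0 t}V_{k_0} = V_{k_0}^t$; multiplication by $s_0^m$ sends $H^0(kA')$ into $\cF_{mk_0 t}V_{k + mk_0}$, but this doesn't directly bound the level of the original sections. So instead the right mechanism is: since $A'$ is ample and strictly smaller than $A$, for $k$ large and divisible $H^0(kA')$ is generated in a way compatible with small filtration levels — more precisely, I would exploit that the filtration is multiplicative and that $t$ is strictly below $e_{\max}$, so there is "room" to write sections of $kA'$ as products pulling their level up to $kt$.

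The honest main obstacle, and the step I expect to require the most care, is exactly this last claim: showing $H^0(kA') \subseteq \cF_{kt}V_k$ for large divisible $k$. The clean argument should run: choose $k_0$ and $s_0 \in \cF_{k_0 t}V_{k_0}\setminus\{0\}$ as above; by ampleness of $A$, for $k$ large and divisible the divisor $kA - k_0\operatorname{div}(s_0) = kA - k_0 L$ (interpreting $s_0$ as cutting out a section of $k_0 L$) ... — the point is that multiplication by $s_0$ gives an injection $H^0(kA - k_0 L) \hookrightarrow \cF_{k_0 t} V_{k+k_0}$, and iterating, $H^0(kA - mk_0 L)\otimes s_0^m \hookrightarrow \cF_{mk_0 t}V_{k + mk_0}$; choosing $m \approx k/k_0$ and rescaling indices, sections of a suitably shrunk ample bundle land in $\cF_{\sim kt}$. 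The delicate bookkeeping is to match up the "shrinking" $A \rightsquigarrow A'$ with the linear growth $mk_0 t \to kt$ of the filtration level while keeping $A'$ ample; this is where the slack $t < e_{\max}$ is indispensable and where one must be careful about divisibility. Once (ii) is arranged this way, the lemma follows immediately since (i) was already checked and Definition~\ref{defi:contample} is satisfied for $V_\bu^t$ with the Kodaira decomposition $L = A' + E'$.
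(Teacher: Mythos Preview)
Your argument for (i) is fine and matches the paper's: by Lemma~\ref{lem:subadd}, $t < e_\ma(V_\bu,\cF)$ forces $kt < e_\ma(V_k,\cF)$ and hence $V_k^t \neq 0$ for all $k \gg 1$ directly; the coprimality digression is unnecessary.

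For (ii) there is a genuine gap. You correctly identify that one must multiply by powers of a section at high filtration level, but your sketch never controls the filtration level of the image $W_k$ of $H^0(kA)$ inside $V_{km}$ \emph{before} multiplying. Multiplicativity only says $(\cF_a V_{km})\cdot(\cF_b V_{mk_0}) \subset \cF_{a+b} V_{km+mk_0}$, which is useless without a lower bound on $a$; and your ``shrink $A$ to $A'$'' idea does not supply one, since the filtration is abstract and sections of a smaller ample divisor have no reason whatsoever to sit at higher $\cF$-level. The missing ingredient is Lemma~\ref{lem:fingen}: because $W_\bu$ is \emph{finitely generated}, pointwise left-boundedness of $\cF$ upgrades automatically to a uniform bound $W_k \subset V_{km}^a$ for some fixed $a\in\R$ and all $k$. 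With this baseline the paper then picks $\e>0$ with $t+\e < e_\ma(V_\bu,\cF)$, a nonzero $v_p \in V_p^{t+\e}$ for $p \gg 1$, and computes $H^0(kA)\cdot(sv_p)^k \subset V_{km}^a \cdot V_{kp}^{t+\e} \subset V_{k(m+p)}^{s_p}$ with $s_p = \frac{am+(t+\e)p}{m+p} > t$ once $p$ is large; the divisor of $sv_p$ furnishes the new effective part in the Kodaira decomposition. Note also that your choice $s_0 \in V_{k_0}^t$ at level exactly $t$ would not suffice even with the baseline in hand: the weighted average $\frac{am+tp}{m+p}$ falls strictly below $t$ whenever $a < t$, so the $\e$-slack is essential, not merely convenient.
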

This gives in particular an elementary proof of~\cite{Che08b} Proposition 4.9.
\begin{proof}
Given $t< e_{\max}(V_\bu,\cF)$ we have $kt<e_\ma(V_k,\cF)$ for each $k\gg 1$ thus $V_k^t=\cF_{kt} V_k\neq 0$ by definition, and we see that (i) of Definition~\ref{defi:contample} is satisfied.

Let us now turn to (ii). By Definition~\ref{defi:contample} there exists an integer $m$, an ample line bundle $A$ and a non-zero section $s\in H^0(mL-A)$ such that the image $W_k$ of the map $H^0(kA)\to H^0(km L)$ given by multiplication by $s^k$ is contained in $V_{km}$ for all $k\ge 1$. Since $W_\bu$ is finitely generated there exists $a\in\R$ such that $W_k\subset V_{km}^a$ for all $k$ by Lemma~\ref{lem:fingen}.

On the other hand let $\e>0$ such that $t+\e<e_{\max}(V_\bu,\cF)$, and for each $p\gg 1$ pick  a non-zero element $v_p\in\cF_{t+\e}V_p$. The image of map
$$
H^0(kA)\to V_{k(m+p)}
$$
given by mutliplication by $(s v_p)^k$ lies in
$$
W_k\cdot V_{kp}^{t+\e}\subset V_{km}^a V_{kp}^{t+\e}\subset V_{k(m+p)}^{s_p}
$$
with
$$
s_p:=\frac{akm+(t+\e)kp}{km+kp}
$$
Now we have $s_p>t$ for $p\gg 1$, and it follows that the image is contained in $V_{k(m+p)}^t$ as desired.
\end{proof}

The Okounkov bodies $\Delta(V_\bu^t)$ make up a non-increasing family of compact convex subsets of $\Delta(V_\bu)$. If $\cF$ is multiplicative then it is straightforward to check that the following  \emph{convexity property} holds:
\begin{equation}\label{equ:conv}
t\Delta(V_\bu^{s_1})+(1-t)\Delta(V_\bu^{s_2})\subset\Delta(V_\bu^{ts_1+(1-t)s_2})
\end{equation}
for all $s_1,s_2\in\R$ and $0\le t\le 1$.

On the other hand if $\cF$ is pointwise left-bounded then for each $k$ we have $V_k=\bigcup_{t\in\R}V_k^t$, which implies
$$
\Gamma(V_\bu)=\bigcup_{t\in\R}\Gamma(V_\bu^t).
$$
By Lemma~\ref{lem:contample} it follows as recalled above that each of the semigroups $\Gamma(V_\bu)$ and $\Gamma(V_\bu^t)$, $t<e_\ma(V_\bu,\cF)$ spans $\Z^{n+1}$ (\cite{LM08} Lemma 2.12). By~\cite{LM08} Lemma A.3 we thus get
\begin{equation}\label{equ:union}
\Delta(V_\bu)^\circ=\bigcup_{t\in\R}\Delta(V_\bu^t)^\circ
\end{equation}
when $\cF$ is pointwise left-bounded.

\begin{defi}\label{defi:conc} Let $\cF$ be a multiplicative filtration of a graded linear series $V_\bu$ of $L$ containing an ample series, and assume that $\cF$ is pointwise left-bounded and linearly right-bounded.
\begin{enumerate}
\item[(i)] The \emph{concave transform} of $\cF$ is
the concave function
$$
G_{(V_\bu,\cF)}:\Delta(V_\bu)\to[-\infty,+\infty[
$$
defined by
$$
G_{(V_\bu,\cF)}(x):=\sup\{t\in\R,\,x\in\Delta(V_\bu^t)\}.
$$
\item[(ii)] The \emph{filtered Okounkov body} is defined as
$$
\hd(V_\bu,\cF):=\{(x,t)\in\Delta(V_\bu)\times\R,\,0\le t\le G_{(V_\bu,\cF)}(x)\}
$$
\end{enumerate}
\end{defi}
The function $G_{(V_\bu,\cF)}$ is indeed concave by (\ref{equ:conv}). It takes finite values on the interior $\Delta(V_\bu)^\circ$ by (\ref{equ:union}) and is thus continuous there since it is concave. It is furthermore upper semi-continuous on the whole of $\Delta(V_\bu)$ since we have for each $t\in\R$
\begin{equation}\label{equ:sublevel}
\{\,G_{(V_\bu,\cF)}\ge t\}=\bigcap_{s<t}\Delta(V_\bu^s).
\end{equation}
Note also that
\begin{equation}\label{equ:bounds}
e_\mi(V_\bu,\cF)\le G_{(V_\bu,\cF)}\le e_\ma(V_\bu,\cF)
\end{equation}
on $\Delta(V_\bu)$.

\begin{rem}\label{rem:wn} $G_{(V_\bu,\cF)}$ can also be obtained as the \emph{concave envelope} in the sense of~\cite{WN09} Section 3 of the super-additive function $g:\Gamma(V_\bu)\to\R$ defined by
\begin{equation}\label{equ:g}
g(k,\a):=\sup\{t\in\R,\,\a=\ord_z(s)\text{ for some }s\in\cF_t V_k\}.
\end{equation}
\end{rem}

We are now in a position to state our main results.

\begin{thm}\label{thm:main}
Let $\cF$ be a pointwise left-bounded and linearly right-bounded multiplicative filtration of a graded linear series $V_\bu$ of $L$ containing an ample series. Then the measures on $\R$ defined by
$$
\mu_k:=k^{-n}\sum_j\delta_{k^{-1}e_j(V_k,\cF)}.
$$
converge weakly to $\left(G_{(V_\bu,\cF)}\right)_*\lambda$ as $k\to\infty$, where $\lambda$ denotes the restriction of the Lebesgue measure of $\R^n$ to $\Delta(V_\bu)^\circ$.
\end{thm}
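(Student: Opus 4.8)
The plan is to reduce the statement to a counting statement about the semigroup $\Gamma(V_\bu)$ and its sub-semigroups $\Gamma(V_\bu^t)$, and then push it through the Okounkov-body machinery recalled in Section~\ref{sec:okounkov}. The starting point is the distributional identity~(\ref{equ:distrib}), which gives $\mu_k(]t,+\infty[)\,k^n=\sharp\{j:e_j(V_k,\cF)>kt\}=\dim\cF_{kt}V_k=\dim V_k^t=\sharp\Gamma_k(V_\bu^t)$, using~(\ref{equ:graded}). Thus the cumulative distribution function of $\mu_k$ is governed by $k^{-n}\dim V_k^t$. Since for $t<e_\ma(V_\bu,\cF)$ the series $V_\bu^t$ contains an ample series by Lemma~\ref{lem:contample}, formula~(\ref{equ:fuj}) yields $\lim_{k\to\infty}k^{-n}\dim V_k^t=\vol\Delta(V_\bu^t)$; for $t>e_\ma(V_\bu,\cF)$ we have $V_k^t=0$ for $k\gg 1$ by Lemma~\ref{lem:subadd}, so the limit is $0$ there too. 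Hence $\mu_k(]t,+\infty[)\to\vol\Delta(V_\bu^t)$ for every $t$ (at $t=e_\ma$ one can squeeze). On the other hand, by definition of $G_{(V_\bu,\cF)}$ and~(\ref{equ:sublevel}), the set $\{G_{(V_\bu,\cF)}>t\}=\bigcup_{s>t}\Delta(V_\bu^s)$ differs from $\Delta(V_\bu^t)$ only on a set of measure zero: indeed $\Delta(V_\bu^s)\subset\{G_{(V_\bu,\cF)}\ge s\}\subset\{G_{(V_\bu,\cF)}>t\}$ for $s>t$, while $\{G_{(V_\bu,\cF)}>t\}\subset\Delta(V_\bu^t)$, and a decreasing family of convex bodies $\Delta(V_\bu^s)$ has $\vol\bigcup_{s>t}\Delta(V_\bu^s)^\circ=\lim_{s\downarrow t}\vol\Delta(V_\bu^s)=\vol\Delta(V_\bu^t)$ because $s\mapsto\vol\Delta(V_\bu^s)$ is monotone and, being a limit of the concave-in-nature volume function, continuous from the right at interior points. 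Therefore $\bigl(G_{(V_\bu,\cF)}\bigr)_*\lambda\,(]t,+\infty[)=\lambda\bigl(\{G_{(V_\bu,\cF)}>t\}\bigr)=\vol\Delta(V_\bu^t)=\lim_k\mu_k(]t,+\infty[)$.

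Next I would upgrade pointwise convergence of tail masses into weak convergence of measures. All the $\mu_k$ are supported in the fixed compact interval $[e_\mi(V_\bu,\cF)-o(1),\,C]$ by linear right-boundedness and~(\ref{equ:emin}); more carefully, $k^{-1}e_\mi(V_k,\cF)$ need not be bounded below uniformly, but since we only test against bounded continuous $f$, and since the total mass $\mu_k(\R)=k^{-n}\dim V_k\to\vol\Delta(V_\bu)$ is bounded, the family is tight (the mass escaping to $-\infty$ is controlled: for $t_0$ very negative, $\mu_k(]-\infty,t_0])=k^{-n}\dim V_k-k^{-n}\dim V_k^{t_0}\to\vol\Delta(V_\bu)-\vol\Delta(V_\bu^{t_0})$, which is small for $t_0\ll0$ by~(\ref{equ:union}), since $\bigcup_{t}\Delta(V_\bu^t)^\circ=\Delta(V_\bu)^\circ$). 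Tightness plus convergence of $\mu_k(]t,+\infty[)$ at every continuity point $t$ of the limiting distribution function is exactly the classical criterion for weak convergence, so $\mu_k\to\bigl(G_{(V_\bu,\cF)}\bigr)_*\lambda$ weakly.

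The main obstacle is the measure-zero boundary comparison in the first paragraph: one must be sure that replacing the strict super-level set $\{G_{(V_\bu,\cF)}>t\}$ by $\Delta(V_\bu^t)$ costs nothing in Lebesgue measure, i.e. that the jumps of the monotone function $t\mapsto\vol\Delta(V_\bu^t)$ are accounted for correctly and that $\Delta(V_\bu^t)\setminus\bigcup_{s>t}\Delta(V_\bu^s)$ is $\lambda$-null. This is where right-continuity of the volume function enters, and it follows from the convexity property~(\ref{equ:conv}): for $t$ in the interior of the interval $]e_\mi(V_\bu,\cF),e_\ma(V_\bu,\cF)[$ the bodies $\Delta(V_\bu^s)$ shrink to $\Delta(V_\bu^t)$ up to boundary as $s\downarrow t$ precisely because $G_{(V_\bu,\cF)}$ is continuous on $\Delta(V_\bu)^\circ$; at the two endpoints the limiting measure puts no mass, so no difficulty arises. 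Once this is settled, the rest is a matter of assembling Lemmas~\ref{lem:subadd} and~\ref{lem:contample}, equations~(\ref{equ:fuj}), (\ref{equ:union}), (\ref{equ:sublevel}), and the standard Helly–Bray argument.
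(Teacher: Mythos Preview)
Your proof is correct and follows the same core strategy as the paper's: both identify the tail function $\mu_k([t,+\infty[)=k^{-n}\dim V_k^t$, invoke Lemma~\ref{lem:contample} together with~(\ref{equ:fuj}) to get pointwise convergence to $\vol\Delta(V_\bu^t)$ for $t<e_\ma(V_\bu,\cF)$, and then relate this via~(\ref{equ:sublevel}) to the tail of $(G_{(V_\bu,\cF)})_*\lambda$.

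The only difference is in the wrap-up. You pass from convergence of tail functions to weak convergence via the classical Helly--Bray criterion, which forces you to check tightness (your argument using~(\ref{equ:union}) is fine) and to worry about the countably many $t$ where $\{G=t\}$ has positive measure. The paper bypasses both concerns: it observes that $-\mu_k$ is the distributional derivative of $h_k(t):=k^{-n}\dim V_k^t$, that $-\bigl(G_{(V_\bu,\cF)}\bigr)_*\lambda$ is the distributional derivative of $h(t):=\vol\{G\ge t\}$ (the left-continuous modification of $t\mapsto\vol\Delta(V_\bu^t)$), and then uses the uniform bound $0\le h_k(t)\le k^{-n}\dim H^0(kL)$ plus dominated convergence to get $h_k\to h$ in $L^1_{\mathrm{loc}}(\R)$, whence $h_k'\to h'$ as distributions. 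This is slicker because the $L^1_{\mathrm{loc}}$ framework absorbs the countable bad set and the tightness check in one stroke; your route is more explicit but requires the extra bookkeeping you flagged in your second and third paragraphs.
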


\begin{proof} It is equivalent to show the convergence in the sense of distributions. By (\ref{equ:distrib}) the measure $-\mu_k$ is the distributional derivative of the non-increasing piecewise constant function
$$
h_k(t):=k^{-n}\dim\cF_{kt }V_k=k^{-n}\dim V_k^t.
$$
On the other hand $-\left(G_{(V_\bu,\cF)}\right)_*\lambda$ is the distributional derivative of
$$
h(t):=\vol\{G_{(V_\bu,\cF)}\ge t\}
$$
which is the left-continuous modification of the non-increasing function $g(t):=\vol(\Delta(V_\bu^t))$ by (\ref{equ:sublevel}) and monotone convergence. Now Lemma~\ref{lem:contample} combined with (\ref{equ:fuj}) implies that
$h_k(t)\to g(t)$ for all $t<e_\ma(V_\bu)$, thus $h_k\to h$ a.e. Since
$$
0\le h_k(t)\le k^{-n}\dim H^0(kL)
$$
is uniformly bounded, it follows by dominated convergence that $h_k\to h$ in $L^1_{loc}(\R)$ and we get $h_k'\to h'$ as distributions as desired.
\end{proof}

\begin{rem}\begin{itemize}
\item[(i)] The convergence in Theorem~\ref{thm:main} holds in the weak topology, i.e.~against all continuous functions with compact support. Note however that the total mass of $\mu_k$ is equal to $k^{-n}\dim V_k$, which converges to $\vol(\Delta(\Gamma))$, the total mass of $\left(G_{(V_\bu,\cF)}\right)_*\lambda$. Indeed this follows from (\ref{equ:fuj}) since $V_\bu$ is assumed to contain an ample series. It follows from a standard result in integration theory that the convergence holds as well against all \emph{bounded}  continuous functions on $\R$, and the result can be reformulated in more probabilistic terms by saying that the scaled jumping values
$$
k^{-1}e_1(V_k,\cF)\ge ...\ge k^{-1} e_{N_k}(V_k,\cF)
$$
of the sequence of filtered vector spaces $(V_k,\cF)$ equidistribute as $k\to\infty$ according to the law of $G_{(V_\bu,\cF)}$ (with respect to the Lebesgue measure on $\Delta(V_\bu)$ scaled to mass $1$). This is the formulation we adopted in Theorem A in introduction.

\item[(ii)] We also remark that Theorem~\ref{thm:main}  shows in particular that the law, or pushed-forward measure, $\left(G_{(V_\bu,\cF)}\right)_*\lambda$ does not depend on the choice of the regular system of parameters $z$, even though the Okounkov body $\Delta(V_\bu)$ and hence the function $G_{(V_\bu,\cF)}$ will depend on $z$ (or rather on the infinitesimal flag it defines) in general.
\end{itemize}
\end{rem}

\begin{cor}\label{cor:hdim} With the same assumptions as in Theorem~\ref{thm:main} the euclidian volume of the filtered Okounkov body satisfies
$$
\vol(\hd(V_\bu,\cF))=\int_{t=0}^{+\infty}\vol(\Delta(V_\bu^t))dt=\lim_{k\to\infty}\frac{\mas(V_k,\cF)}{k^{n+1}}.
$$
\end{cor}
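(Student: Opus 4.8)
The plan is to establish the two asserted equalities in turn, each being essentially a bookkeeping reformulation of Theorem~\ref{thm:main}.

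\emph{First equality.} The set $\hd(V_\bu,\cF)$ meets the vertical line $\{x\}\times\R$ in the segment $\{t:0\le t\le G_{(V_\bu,\cF)}(x)\}$, which is empty exactly when $G_{(V_\bu,\cF)}(x)<0$; since $G_{(V_\bu,\cF)}$ is finite-valued on $\Delta(V_\bu)^\circ$ by (\ref{equ:union}), this can only happen on the null set $\partial\Delta(V_\bu)$. Hence Fubini gives $\vol\hd(V_\bu,\cF)=\int_{\Delta(V_\bu)}\max(G_{(V_\bu,\cF)},0)\,d\lambda$. Applying the layer-cake formula to the right-hand side gives $\int_0^{+\infty}\vol\{G_{(V_\bu,\cF)}\ge t\}\,dt$ (the level sets $\{G_{(V_\bu,\cF)}>t\}$ and $\{G_{(V_\bu,\cF)}\ge t\}$ having equal volume for all but countably many $t$, by monotonicity). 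In the notation of the proof of Theorem~\ref{thm:main} this is $\int_0^{+\infty}h(t)\,dt$ with $h(t)=\vol\{G_{(V_\bu,\cF)}\ge t\}$; since $h$ is the left-continuous modification of the non-increasing function $g(t)=\vol\Delta(V_\bu^t)$, the two agree off a countable set, so $\int_0^{+\infty}h=\int_0^{+\infty}\vol\Delta(V_\bu^t)\,dt$, which is the first equality.

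\emph{Second equality.} Starting from $\dim\cF_sV_k=\sharp\{j:e_j(V_k,\cF)\ge s\}$, Fubini yields the elementary identity $\int_0^{+\infty}\dim\cF_sV_k\,ds=\mas(V_k,\cF)$; after the substitution $s=kt$ and multiplication by $k^{-n}$ this becomes $\int_0^{+\infty}h_k(t)\,dt=k^{-n-1}\mas(V_k,\cF)$, with $h_k(t)=k^{-n}\dim\cF_{kt}V_k$ as in the proof of Theorem~\ref{thm:main}. It then remains to pass to the limit under the integral sign. By linear right-boundedness there is $C>0$ with $\cF_{kt}V_k=0$ for $t>C$, so every $h_k$ vanishes on $]C,+\infty[$, and so does $h$ since $G_{(V_\bu,\cF)}\le e_\ma(V_\bu,\cF)\le C$ by (\ref{equ:bounds}); on the fixed compact $[0,C]$ the functions $h_k$ are uniformly bounded by $\sup_k k^{-n}\dim H^0(kL)<+\infty$ and converge a.e.\ to $h$, both facts being recorded in the proof of Theorem~\ref{thm:main}, so dominated convergence gives $\int_0^{+\infty}h_k\to\int_0^{+\infty}h$. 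Combining this with the two displays above and the first equality yields $\lim_k k^{-n-1}\mas(V_k,\cF)=\int_0^{+\infty}\vol\Delta(V_\bu^t)\,dt=\vol\hd(V_\bu,\cF)$.

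There is no real obstacle here — the statement is a corollary of Theorem~\ref{thm:main} — but the one point deserving care is precisely the passage to the limit: linear right-boundedness is what keeps the supports of the $h_k$ inside a fixed compact set, so that the $L^1_{loc}$-convergence furnished by Theorem~\ref{thm:main} upgrades to convergence of the full integrals over $[0,+\infty[$. A secondary subtlety is that several functions in play ($g$, and $G_{(V_\bu,\cF)}$ on $\partial\Delta(V_\bu)$) are only controlled up to null sets, so each step should be phrased as an identity of integrals rather than of functions.
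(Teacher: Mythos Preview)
Your proof is correct and follows essentially the same route as the paper's: both arguments hinge on the linear right-bound to confine everything to a fixed interval $[0,C]$ (resp.\ $]-\infty,e]$) and then pass to the limit using the convergence established in Theorem~\ref{thm:main}. The only cosmetic difference is that the paper tests the weak convergence $\mu_k\to (G_{(V_\bu,\cF)})_*\lambda$ directly against a compactly supported function equal to $\max(t,0)$ on the common support, whereas you unwind this via layer-cake into the survival functions $h_k,h$ and invoke dominated convergence; these are the same computation, and your version has the minor advantage of making the middle equality $\int_0^{+\infty}\vol\Delta(V_\bu^t)\,dt$ explicit, which the paper leaves implicit.
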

\begin{proof} Let $e:=e_\ma(V_\bu,\cF)$. In the notation of Theorem~\ref{thm:main} we have
$$
\frac{\mas(V_k,\cF)}{k^{n+1}}=\int_0^{+\infty} t \mu_k(dt)
$$
and
$$
\vol(\hd(V_\bu,\cF))=\int_0^{+\infty} t\left(G_{(V_\bu,\cF)}\right)_*\lambda(dt)
$$
Now both $\mu_k$ and $\left(G_{(V_\bu,\cF)}\right)_*\lambda$ are supported on $]-\infty,e]$ by Lemma~\ref{lem:subadd} and the upper bound $G_{(V_\bu,\cF)}\le e$. The result thus follows by applyiny Theorem~\ref{thm:main} to a continuous function with compact support that coincides with $\max(t,0)$ on $]-\infty,e]$.
\end{proof}

We finally get a filtered version of the Lazarsfeld-Musta\c{t}\v{a} approximation theorem.

\begin{thm}\label{thm:fuj} Let $\cF$ and $V_\bu$ be as in Theorem~\ref{thm:main}. Then for each $\e>0$ there exists a finitely generated graded subalgebra $W_\bu$ of $V_\bu$ such that
$$
\vol\hd(W_\bu,\cF)\ge\vol\hd(V_\bu,\cF)-\e.
$$
\end{thm}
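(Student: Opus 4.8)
The plan is to reduce to the ordinary (non-filtered) Fujita-type approximation theorem of Lazarsfeld--Musta\c{t}\v{a} by way of the integral formula in Corollary~\ref{cor:hdim}. Set $e:=e_\ma(V_\bu,\cF)$, which is finite because $\cF$ is linearly right-bounded. If $e\le 0$ then $G_{(V_\bu,\cF)}\le 0$ by (\ref{equ:bounds}), so $\hd(V_\bu,\cF)$ has zero volume and any finitely generated subseries of $V_\bu$ containing an ample series (such exists by applying \cite{LM08} to $V_\bu$) works; hence assume $e>0$. By Corollary~\ref{cor:hdim},
$$
\vol\hd(V_\bu,\cF)=\int_0^{e}F(t)\,dt,\qquad F(t):=\vol\big(\Delta(V_\bu^t)\big),
$$
and $F$ is non-increasing and bounded by $\vol\Delta(V_\bu)$, hence Riemann integrable on $[0,e]$. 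Given $\e>0$, I therefore fix a partition $0=t_0<t_1<\dots<t_m<e$, with mesh small enough and $t_m$ close enough to $e$, so that the step function $s\mapsto\sum_i F(t_i)\mathbf{1}_{(t_{i-1},t_i]}(s)$, which lies below $F$, satisfies $\sum_{i=1}^m(t_i-t_{i-1})F(t_i)\ge\vol\hd(V_\bu,\cF)-\frac{\e}{2}$.

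For each $i=1,\dots,m$ we have $t_i<e$, so by Lemma~\ref{lem:contample} the graded linear series $V_\bu^{t_i}$ contains an ample series. Applying the Lazarsfeld--Musta\c{t}\v{a} approximation theorem (\cite{LM08}) to $V_\bu^{t_i}$ yields a finitely generated graded subalgebra $W^{(i)}_\bu\subseteq V_\bu^{t_i}$ with $\vol(\Delta(W^{(i)}_\bu))\ge F(t_i)-\delta$, where $\delta:=\e/(2e)$; and, using the theorem in the refined form that produces an approximant still containing an ample series, we may moreover take $W^{(1)}_\bu$ to contain an ample series. Let $W_\bu\subseteq V_\bu$ be the graded subalgebra generated by $W^{(1)}_\bu,\dots,W^{(m)}_\bu$. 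Then $W_\bu$ is finitely generated, it contains $W^{(1)}_\bu$ hence an ample series, and $\cF$ restricts to it as a multiplicative, pointwise left-bounded and linearly right-bounded filtration, so Corollary~\ref{cor:hdim} applies to $(W_\bu,\cF)$ and gives $\vol\hd(W_\bu,\cF)=\int_0^{+\infty}\vol(\Delta(W^s_\bu))\,ds$.

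The crucial point is that $W^{(i)}_\bu\subseteq W^s_\bu$ whenever $s\le t_i$. Indeed, for every $k$ one has $W^{(i)}_k\subseteq\cF_{kt_i}V_k\subseteq\cF_{ks}V_k$ (as $\cF$ is non-decreasing) and $W^{(i)}_k\subseteq W_k$, whence $W^{(i)}_k\subseteq W_k\cap\cF_{ks}V_k=\cF_{ks}W_k=W^s_k$; consequently $\Gamma(W^{(i)}_\bu)\subseteq\Gamma(W^s_\bu)$, so $\vol(\Delta(W^s_\bu))\ge\vol(\Delta(W^{(i)}_\bu))\ge F(t_i)-\delta$ for all $s\le t_i$. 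Integrating over $s\in[t_{i-1},t_i]$ and summing over $i=1,\dots,m$ gives
$$
\vol\hd(W_\bu,\cF)\ \ge\ \sum_{i=1}^m(t_i-t_{i-1})\big(F(t_i)-\delta\big)\ \ge\ \vol\hd(V_\bu,\cF)-\frac{\e}{2}-t_m\delta\ \ge\ \vol\hd(V_\bu,\cF)-\e,
$$
as wanted.

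The main obstacle, and the reason one approximates the \emph{nested} subseries $V_\bu^{t_i}$ rather than arbitrary finitely generated pieces of $V_\bu$, is exactly the inclusion $W^{(i)}_\bu\subseteq W^s_\bu$ for $s\le t_i$: it guarantees that each approximant contributes to \emph{all} the lower sublevels of $W_\bu$ simultaneously. A secondary technical point is that $W_\bu$ must contain an ample series for Corollary~\ref{cor:hdim} to apply to it; this is why one needs \cite{LM08} in the slightly refined form giving a finitely generated approximant that still contains an ample series, which can always be arranged (e.g.\ by adjoining to the approximant the image of the complete graded linear series of an ample divisor $A$ as in the proof of Lemma~\ref{lem:contample}, an operation that does not decrease its Okounkov-body volume).
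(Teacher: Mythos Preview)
Your argument is correct. The discretization of the integral, the application of \cite{LM08} at each level $t_i$, and the key inclusion $W^{(i)}_\bu\subseteq W^s_\bu$ for $s\le t_i$ all go through as stated. (One trivial slip: the filtration is non-\emph{increasing}, not non-decreasing; but your inclusion $\cF_{kt_i}V_k\subseteq\cF_{ks}V_k$ for $s\le t_i$ is of course the correct one.)

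The paper's own proof is shorter and avoids the Riemann-sum discretization altogether. It uses the single family $W_\bu^{(p)}:=S^\bu V_{\le p}$, the subalgebra generated by $V_1,\dots,V_p$. For each fixed $t<e_\ma(V_\bu,\cF)$, multiplicativity of $\cF$ gives the chain $S^\bu(V^t_{\le p})\subseteq(S^\bu V_{\le p})^t\subseteq V^t_\bu$; since $V^t_\bu$ contains an ample series by Lemma~\ref{lem:contample}, \cite{LM08} applied to $V^t_\bu$ yields $\vol\Delta\big((S^\bu V_{\le p})^t\big)\to\vol\Delta(V^t_\bu)$ by squeezing. Then dominated convergence in the integral of Corollary~\ref{cor:hdim} gives $\vol\hd(S^\bu V_{\le p},\cF)\to\vol\hd(V_\bu,\cF)$. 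The point is that this \emph{one} family of approximants simultaneously approximates all the sublevel series $V^t_\bu$, so there is no need to choose finitely many thresholds $t_i$ and glue separate approximants together. Your construction is more hands-on and makes the mechanism very explicit (each $W^{(i)}$ feeds into all lower sublevels of $W_\bu$), at the cost of the partition bookkeeping and the care needed to ensure $W_\bu$ contains an ample series. Both routes rest on the same ingredients: Lemma~\ref{lem:contample}, \cite{LM08}, and Corollary~\ref{cor:hdim}.
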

\begin{proof} For each $p$ let $S^\bu V_{\le p}$ be the graded
subalgebra of $V_\bu$ generated by $V_1+...+V_p$. For each $t<e_\ma(V_\bu,\cF)$ the graded linear series $V^t_\bu$ contains an ample series by Lemma~\ref{lem:contample}, thus~\cite{LM08} Theorem 3.5 implies that
$$
\lim_{p\to\infty}\vol(S^\bu V_{\le p}^t)=\vol(V^t_\bu)
$$
for each $t<e_\ma(V_\bu,\cF)$. Since we also have
$\vol(S^\bu V_{\le p}^t)\le\vol(V^t_\bu)$ it follows from Corollary~\ref{cor:hdim} and dominated convergence that
$$
\lim_{p\to\infty}\vol \hd(S^\bu V_{\le p},\cF)=\vol\hd(V_\bu,\cF)
$$
and the result follows.
\end{proof}
This result gives in particular a simple proof of~\cite{Che08b} Theorem 5.1.

\section{Applications to the study of arithmetic volume}\label{sec:arith}
In this section $K$ will stand for a number field, $\cO_K$ for its ring of integers and $\A_K$ for its ring of ad\`eles. The set of finite (resp.~infinite) places of $K$ will be denoted by $\Sigma_f$ (resp. $\Sigma_\infty$). For each finite place $v$ we denote by $\cO_v$ the valuation ring of the completion $K_v$ of $K$.

Given a finite set $S$ we will write
$$
\hdim_K S=\frac{1}{[K:\Q]}\log\sharp S.
$$
As an (admittedly loose) justification for this notation note that
$$
\hdim_K S=(\dim_K F)\hdim_F S
$$
for any finite extension $F$ of $K$.

\subsection{Lattices}
We start by collecting a few facts on lattices. Let $v$ be a finite place of $K$, $V$ be a finite dimensional $K_v$-vector space and $\cV$ be an $\cO_v$-submodule of $V$. Recall that $\cV$ is said to be a $K_v$-\emph{lattice} of $V$ if there exists a $K_v$-basis $e_1,...,e_N$ of $V$ such that
\begin{equation}\label{equ:basis}
\cV=\cO_ve_1+...+\cO_v e_N.
\end{equation}
By~\cite{BG06} Proposition C.2.2, $\cV$ is a $K_v$-lattice iff it is finitely generated and spans $V$ over $K_v$, and also iff $\cV$ is compact and open in $V$. The map sending a norm to its unit ball establishes a one-to-one correspondence between the set of ultra-metric norms on $V$ and the set of $K_v$-lattices. A given ultra-metric norm $\|\cdot\|_v$ on $V$ is \emph{diagonalized} in a basis $e_1,...,e_N$, i.e.
$$
\Big\|\sum x_j e_j\Big\|_v=\max_j|x_j|_v
$$
holds for all $x_j\in K_v$ iff $e_1,...,e_N$ satisfies (\ref{equ:basis}) for the unit-ball $\cV$ of $\|\cdot\|_v$.

Let now $E$ be a finite dimensional $K$-vector space. An $\cO_K$-submodule $\cE$ of $E$ is said to be a $K$-\emph{lattice} iff $\cE$ is finitely generated and spans $E$ over $K$. By~\cite{BG06} Proposition C.2.6 this is the case iff its closure $\cE_v$ in the $v$-completion $E_v$ of $E$ is a $K_v$-lattice for each finite place $v$ and there exists a fixed $K$-basis $e_1,...,e_N$ of $E$ such that
$$
\cE_v=\cO_ve_1+...+\cO_v e_N
$$
holds for all but finitely many $v\in\Sigma_f$.

\subsection{Adelically normed vector spaces and their filtration by minima}

\begin{defi}\label{defi:adel} An \emph{adelically normed} $K$-vector space $\overline E$ is a finite dimensional $K$-vector space $E$ endowed for each place $v$ with a norm $\|\cdot\|_v$ on $E_v$ (with respect to $|\cdot|_v$ and ultra-metric when $v$ is finite) satisfying the \emph{local finiteness} condition:
\begin{equation}\label{equ:finite}\text{For each }x\in E\text{ we have }\|x\|_v\le 1\text{ for all but finitely many places }v.
\end{equation}
The associated $\cO_K$-module of $\overline E$ is defined
$$
\cE=\{x\in
E,\,\|x\|_v\le 1\text{ for all }v\in\Sigma_f\}
$$
and the set of \emph{small elements} is defined by
$$
\widehat{E}:=\{x\in E,\,\|x\|_v\le 1\text{ for all }v\in\Sigma\}.
$$
\end{defi}

The local finiteness condition  is equivalent to the fact that the associated $\cO_K$-submodule $\cE$ spans $E$ over $K$, and implies that for each finite place $v$ the closure $\cE_v$ of $\cE$ in $\cE_v$ spans $E_v$ over $K_v$. Since $\cE_v$ is also contained in the unit ball
$$
\{x\in E_v,\,\|x\|_v\le 1\},
$$
which is finitely generated over $\cO_v$, it follows that $\cE_v$ is a \emph{$K_v$-lattice} of $E_v$.

The \emph{algebraic envelope} $\overline E_\al$ of $\overline E$ is now defined by setting $\|\cdot\|_{\al,v}=\|\cdot\|_v$ when $v$ is infinite and letting $\|\cdot\|_{\al,v}$ be the ultra-metric norm on $E_v$ with unit ball $\cE_v$. Note that
$$
\|\cdot\|_{\al,v}\ge\|\cdot\|_v
$$
for all $v$ with equality when $v$ is infinite (but not when $v$ is finite in general). Note however that
$$
\cE_\al=\cE\text{ and }\widehat{E}_\al=\widehat{E}.
$$

\begin{defi}Let $\overline E$ be an adelically normed $K$-vector space. The induced \emph{filtration by minima} is the non-increasing $\R$-indexed filtration $\cF$ of $E$ defined by
$$
\cF_t E:=\mathrm{Vect}_K\{x\in\cE,\,\max_{v\in\Sigma_\infty}\|x\|_v\le e^{-t}\}.
$$
\end{defi}
Note that the filtrations by minima induced by $\overline E$ and its algebraic envelope $\overline E_\al$ coincide. This filtration is non-increasing and left-continuous. It is also left-bounded, but not necessarily right-bounded. Indeed we have:

\begin{prop}\label{prop:hfinite} Let $\overline E$ be an adelically normed $K$-vector space. The following conditions are equivalent.
\begin{enumerate}
\item[(i)] The associated $\cO_K$-module $\cE$ is a $K$-lattice of $E$.
\item[(ii)] The set of small elements $\widehat{E}$ is finite.
\item[(iii)] The filtration by minima is right-bounded.
\end{enumerate}
\end{prop}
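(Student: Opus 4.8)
The plan is to establish the cyclic chain of implications (i) $\Rightarrow$ (ii) $\Rightarrow$ (iii) $\Rightarrow$ (i). It is convenient to work in $E_\R:=E\otimes_\Q\R\cong\prod_{v\in\Sigma_\infty}E_v$: the function $\|x\|_\infty:=\max_{v\in\Sigma_\infty}\|x\|_v$ is a genuine norm on this finite-dimensional real vector space, so its closed unit ball $B:=\{x\in E_\R:\|x\|_v\le 1\text{ for all }v\in\Sigma_\infty\}$ is compact. Identifying $E$, and hence the $\cO_K$-module $\cE$, with its image under the diagonal embedding $E\hookrightarrow E_\R$, one reads off from the definitions that $\widehat E=\cE\cap B$ and, more generally, that $\cF_tE$ is the $K$-span of $\cE\cap\{\|\cdot\|_\infty\le e^{-t}\}$.

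First I would prove (i) $\Rightarrow$ (ii). If $\cE$ is a $K$-lattice of $E$ then, since $\cO_K$ is a free $\Z$-module of rank $[K:\Q]$, $\cE$ is finitely generated and torsion-free over $\Z$; as it spans $E$ over $K$, hence over $\Q$, its $\Z$-rank equals $\dim_\Q E=\dim_\R E_\R$, so $\cE$ is a full, and in particular discrete, lattice in $E_\R$ --- this is the classical fact underlying the geometry of numbers over a number field. A discrete subgroup meets the compact set $B$ in a finite set, whence $\widehat E=\cE\cap B$ is finite.

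Next I would prove (ii) $\Rightarrow$ (iii) by contraposition: if $\cF$ is not right-bounded then $\cF_nE\ne 0$ for every $n\in\N$, so for each $n$ the generating set $\cE\cap\{\|\cdot\|_\infty\le e^{-n}\}$ of $\cF_nE$ contains a nonzero vector $y_n$, which then lies in $\cE\cap B=\widehat E$ because $e^{-n}\le 1$. Since any fixed nonzero $y\in\widehat E$ has $\|y\|_\infty>0$, the equality $y=y_n$ forces $n\le-\log\|y\|_\infty$; thus each value is attained for only finitely many $n$ and $\{y_n:n\in\N\}$ is an infinite subset of $\widehat E$, contradicting (ii). Finally, for (iii) $\Rightarrow$ (i): if $\cF_{t_0}E=0$ for some $t_0$, then every nonzero $x\in\cE$ satisfies $\|x\|_\infty>e^{-t_0}$, for otherwise $x$ would belong to the generating set of $\cF_{t_0}E$ and hence vanish; so $0$ is isolated in the subgroup $\cE\subset E_\R$, which for a subgroup of a finite-dimensional normed real vector space forces $\cE$ to be discrete, hence finitely generated over $\Z$ and a fortiori over $\cO_K$. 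Combined with the already-recalled consequence of local finiteness that $\cE$ spans $E$ over $K$, this shows $\cE$ is a $K$-lattice.

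All the genuinely arithmetic content sits in the implication (i) $\Rightarrow$ (ii), namely the classical statement that a $K$-lattice in $E$ is a discrete cocompact subgroup of $E\otimes_\Q\R$; the remaining steps are elementary point-set topology and linear algebra, and I expect the only real work to be the careful bookkeeping keeping the finite and infinite places in their respective roles.
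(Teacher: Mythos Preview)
Your proof is correct and follows essentially the same approach as the paper: both arguments hinge on the equivalence of each of (i), (ii), (iii) with the discreteness of the image of $\cE$ under the diagonal embedding $E\hookrightarrow E\otimes_\Q\R$. The paper packages this as (ii) $\Leftrightarrow$ discrete $\Leftrightarrow$ (iii) together with (i) $\Leftrightarrow$ discrete, whereas you run the cyclic chain; the only substantive difference is that for (i) $\Rightarrow$ (ii) the paper cites \cite{BG06} Corollary~C.2.7 while you spell out the rank count directly.
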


\begin{proof} (ii) means that the image of $\cE$ under the diagonal embedding
$$
E\to E\otimes_\Q\R\simeq\prod_{v\in\Sigma_\infty}E_v
$$
meets the unit ball of the sup-norm
$$
\max_{v\in\Sigma_\infty}\|\cdot\|_v
$$
in a finite set. This holds iff the diagonal image of $\cE$ is a discrete subgroup, and is thus equivalent to (iii). If (i) holds then the diagonal image of $\cE$ in the $\R$-vector space $E\otimes_\Q\R$ is a lattice by~\cite{BG06} Corollary C.2.7 and therefore (ii) holds. Conversely if the diagonal image of $\cE$ is a discrete subgroup of $E\otimes_\Q\R$ then it is finitely generated over $\Z$, hence also over $\cO_K$ and the result follows since $\cE$ spans $E$ over $K$ by the local finiteness assumption in Definition~\ref{defi:adel}.
\end{proof}

\begin{rem}\label{rem:otherdef} Let us quickly compare Definition~\ref{defi:adel} to other existing notions.
\begin{itemize}
\item The local finiteness condition in Definition~\ref{defi:adel} corresponds to Assumption (A2) in~\cite{RLV00} P.3. 

\item Proposition~\ref{prop:hfinite}Ê says that $\widehat{E}$ is finite iff $\overline E_\al$ corresponds to what is called a \emph{normed vector bundle over $\Spec{\cO_K}$}, i.e. a finitely generated projective (or equivalently torsion-free) $\cO_K$-module $\cE$ together with a family of norms $\|\cdot\|_v$, $v\in\Sigma_\infty$ on $\cE\otimes_{\cO_K}K$.

\item  An \emph{adelic vector bundle over $\Spec K$} in the sense of~\cite{Gau08} D\'efinition 3.1 is an adelically normed $K$-vector space $\overline E$ in our sense which is furthermore \emph{generically trivial}: their exists a $K$-basis $e_1,...,e_N$ of $E$ in which $\|\cdot\|_v$ is diagonalized for all but finitely many $v\in\Sigma_v$. We thus see that an adelically normed vector space $\overline E$ is an adelic vector bundle iff $\widehat{E}$ is finite and $\|\cdot\|_v=\|\cdot\|_{v,\al}$ for all but finitely many $v\in\Sigma_f$. The algebraicity of adelic vector bundles is also discussed in \cite{Gau09} as {\it purity}.
\end{itemize}
\end{rem}

\subsection{Gillet-Soul\'e's Theorem}
Let $\overline E$ be an adelically normed $K$-vector space of dimension $N$. If its filtration by minima $\cF$ is right-bounded (see Proposition~\ref{prop:hfinite}) then we may consider the jumping values of
$\cF$, which will be denoted by
$$
e_1(\overline E)\ge...\ge e_N(\overline E).
$$
Note again that
$$
e_j(\overline E_\al)=e_j(\overline E).
$$
These jumping values are obtained by applying $-\log$ to the \emph{successive minima} in the sense of~\cite{BG06} Definition C.2.9.

The key point for us is now~\cite{GS91} Proposition 6, which asserts that
\begin{equation}\label{equ:gs}
\mathrm{mass}(\overline E,\cF)=\sum_{e_j(\overline E)>0}e_j(\overline E)
=\hdim_K\widehat{E}+O(N\log N)
\end{equation}
where the implicit constant in $O$ only depends on $K$. More precisely, observe first that $\overline E$ may be replaced by $\overline E_\al$ without changing both sides of (\ref{equ:gs}). The result can then be deduced from~\cite{GS91} Proposition 6 just as the adelic version of Minkowski's second theorem is deduced from its classical version in Appendix C.2.18 of~\cite{BG06} P.614.

\subsection{Adelically normed graded linear series}
\begin{defi}\label{defi:adelgraded} Let $L$ be a big line bundle over a projective $K$-variety $X$.
\begin{itemize}
\item[(i)]
An \emph{adelically normed graded linear series} $\overline{V}_\bu$ of $L$ is a graded linear series $V_\bu\subset R(L)$ such that each graded piece $V_k$ is adelically normed with norms $\|\cdot\|_{k,v}$ satisfying
$$
\| x y\|_{v,k+m}\le\|x\|_{v,k}\|y\|_{v,m}
$$
for each $x\in V_k$, $y\in V_m$ and each $v\in\Sigma$.

\item[(ii)] The \emph{arithmetic volume} of $\overline V_\bu$ is then defined by (cf. \cite{Mor09a})
$$
\hvol(\overline V_\bu):=\limsup_{k\to\infty}\frac{(n+1)!}{k^{n+1}}\hh(\overline V_k),
$$
with $n:=\dim_K X$ as usual.
\end{itemize}
\end{defi}
Our definition is a natural extension of conditions (A1) and (A2) in~\cite{RLV00} P.3. 

The filtrations by minima of the graded pieces $\overline V_k$ induce a \emph{mutliplicative filtration} $\cF$ on $V_\bu$, the \emph{filtration by minima of $\overline V_\bu$}.

This filtration is always pointwise left-bounded, but not necessarily pointwise right-bounded in general (see Proposition~\ref{prop:hfinite}).

In concrete terms the filtration by minima of $\overline V_\bu$ is linearly left-bounded iff the associated $\cO_K$-module $\cV_k$ contains a finite set of generators of $V_k$ whose norms grow at most exponentially fast with $k$, whereas it is linearly right-bounded iff the minimal norm of a non-zero vector in $\cV_k$ decays at most exponentially fast.\\

The main example of adelically normed graded linear series arises in the usual Arakelov-geometric setting. Let $\cL\to\cX$ be projective flat model of $L\to X$ over $\cO_K$ and assume that $L_\C$ is endowed with a conjugation-invariant continuous Hermitian metric. Then for each $k$ we get an algebraic adelically normed $K$-vector space $\overline{H^0(X,kL)}^{\sup}$ by taking $H^0(\cX,k\cL)$ as its associated $\cO_K$-module and the sup-norms as the norms at infinity, and this defines an adelically normed linear series $\overline{R(L)}^{\sup}$.

Denote by $e^{-\phi}$ the metric on $L_\C$ and by $h_{\cL,\phi}$ the induced (normalized) height function on $X(\overline K)$.
Recall that the \emph{essential minimum} of the height function is defined by
$$
\text{ess-min } h_{\cL,\phi}:=\sup\left\{\inf_{U(\overline K)} h_{\cL,\phi},\,U\subset X\text{ non-empty Zariski open}\right\}.
$$

\begin{lem}\label{lem:linbded} The essential minimum $\text{ess-min }h_{\cL,\phi}$ of the height function
is finite.
The filtration by minima of $\overline{R(L)}^{\sup}$ (and hence of any graded subseries) induced by the above Arakelov-geometric setting is linearly right-bounded.
\end{lem}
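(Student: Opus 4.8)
\emph{Finiteness of the essential minimum.} The plan is to show that $\operatorname{ess-min} h_{\cL,\phi}$ is both bounded below and bounded above. For the lower bound, one uses that $L$ is big: after replacing $\cL$ by a positive power and twisting, one finds an arithmetically ample model $\overline{\cA}$ with $\cL \ge \cA$ in a suitable sense (Kodaira-type decomposition $L = A + E$ with $A$ ample, $E$ effective), so that $h_{\cL,\phi} \ge h_{\cA,\psi} - c$ off the support of $E$, and $h_{\cA,\psi}$ is bounded below on all of $X(\overline{K})$ by positivity (Zhang's theorem on the essential minimum of ample metrized line bundles, or directly via the arithmetic Hilbert–Samuel formula and the existence of many small sections). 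Since the essential minimum ignores any proper closed subset, this gives $\operatorname{ess-min} h_{\cL,\phi} > -\infty$. For the upper bound: if $\operatorname{ess-min} h_{\cL,\phi}$ were $+\infty$, then $h_{\cL,\phi} \ge t$ on a dense open subset for every $t$, forcing the arithmetic degree of $\cL$ restricted to any curve to be arbitrarily large — contradiction with fixing a single model $\overline{\cL}$. Concretely, pick any closed point $x \in X(\overline{K})$ not lying in the bad locus of $\cL$; then $h_{\cL,\phi}(x)$ is a finite real number and $\operatorname{ess-min} h_{\cL,\phi} \le h_{\cL,\phi}(y)$ for $y$ ranging over a dense set, hence $\operatorname{ess-min} h_{\cL,\phi} < +\infty$ because there exist points of bounded height (e.g. by choosing $y$ with bounded coordinates in some projective embedding).

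\emph{Linear right-boundedness of the filtration by minima.} The key identity is that, for the filtration $\cF$ by minima of $\overline{H^0(X,kL)}^{\sup}$, the top jumping value $e_{\max}(V_k,\cF) = e_1(\overline{V_k})$ equals $-\log$ of the first successive minimum of the lattice, i.e. the supremum over nonzero $s \in H^0(\cX,k\cL)$ of $-\log \sup_{X(\C)} |s|_{k\phi}$. So I must bound this quantity linearly in $k$. The standard tool is a comparison between the sup-norm and evaluation at a well-chosen point: fix a closed point $p \in X(\overline{K})$ outside the base locus of all the relevant sections and outside the support of $E$. For any nonzero $s \in H^0(kL)$, the section $s(p)$ is a nonzero element of the fiber $(kL)_p$, and the product formula / Liouville-type inequality gives $\sum_{v} \log \|s(p)\|_v \ge -c_0 \, [K(p):\Q]$ for a constant $c_0$ depending only on the chosen integral model of $(X,L,p)$ and not on $k$ (this is exactly an arithmetic Bézout / height-of-a-point estimate). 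Rewriting the infinite-place contributions in terms of the sup-norms and the value of the height at $p$ yields $-\log \sup_{X(\C)}|s|_{k\phi} \le h_{k\overline{\cL}}(p) + c_0 \le k\, h_{\overline{\cL}}(p) + c_1$, so $e_{\max}(V_k,\cF) \le Ck$ with $C := h_{\overline{\cL}}(p) + c_1$. Alternatively, and perhaps more cleanly, one bounds $e_{\max}(V_k,\cF)$ directly by $k \cdot \operatorname{ess-min} h_{\cL,\phi} + o(k)$ using that a section with $\sup |s| \le e^{-kt}$ forces $h_{\cL,\phi} \ge t$ on the (dense, since $L$ big) complement of $\operatorname{div}(s)$; combined with the finiteness just proved, this gives the linear bound with $C$ any number strictly larger than $\operatorname{ess-min} h_{\cL,\phi}$.

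\emph{Passage to subseries.} Since the norms only increase and the lattices only shrink when passing to a graded subseries $\overline{V}_\bu \subset \overline{R(L)}^{\sup}$, we have $e_{\max}(V_k,\cF) \le e_{\max}(H^0(kL),\cF)$ for each $k$, so linear right-boundedness is inherited automatically; this is immediate and needs no separate argument.

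\emph{Main obstacle.} The genuinely non-formal point is the uniform-in-$k$ Liouville/arithmetic-Bézout inequality at the chosen point $p$ — i.e. producing the constant $c_0$ independent of $k$. This is where one invokes that everything descends from a \emph{single} fixed projective model $\overline{\cL}$ over $\cO_K$: the height $h_{\overline{\cL}}(p)$ is one fixed real number, and the gap between $-\log\sup_{X(\C)}|s|$ and $\tfrac{1}{[K(p):\Q]}\sum_{v\mid\infty}\log\|s(p)\|_v$ is controlled by a distortion constant between the sup-norm on $X(\C)$ and the evaluation norm at the finitely many complex points above $p$, which again does not grow with $k$ because the metric on $kL_\C$ is the $k$-th power of a fixed one. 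Everything else reduces to bookkeeping with the product formula and the definition of the height.
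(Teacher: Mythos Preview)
Your alternative (b) for linear right-boundedness --- an integral section $s\in H^0(\cX,k\cL)$ with $\sup_{X(\C)}|s|\le e^{-kt}$ forces $h_{\cL,\phi}\ge t$ off $\mathrm{div}(s)$, hence $k^{-1}e_{\max}(V_k,\cF)\le\text{ess-min }h_{\cL,\phi}$ --- is exactly the paper's reduction of the second assertion to the first, and is correct. Your approach (a), by contrast, has a gap: you cannot fix a single point $p$ in advance and guarantee that the particular section realizing $e_{\max}(V_k,\cF)$ is nonzero at $p$; avoiding the stable base locus only ensures that \emph{some} section of $kL$ does not vanish at $p$, not the extremal one. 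This is precisely why one must pass through the essential minimum rather than a fixed test point.

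The more serious gap is in your argument for $\text{ess-min }h_{\cL,\phi}<+\infty$, which is the only bound the paper actually proves and the one carrying all the content. Saying ``pick a point'' or ``choose $y$ with bounded coordinates in a projective embedding'' does not establish what is required, namely that $\{h_{\cL,\phi}\le C\}$ is Zariski \emph{dense} in $X$ for some finite $C$: a single point can be excised by shrinking $U$, and points of $\PP^N$ with bounded integer coordinates need not lie on $X$ at all. The paper instead reduces to the arithmetically ample, positively curved case and invokes Zhang~\cite{Zha95}: replace $\phi$ by a larger smooth weight $\psi$ (so $h_{\cL,\phi}\le h_{\cL,\psi}$), then twist by an ample $(\cA,\phi_A)$ admitting a small global section so that $h_{\cL,\psi}\le h_{\cL+\cA,\psi+\phi_A}$, arranging that $\cL+\cA$ is relatively ample and $dd^c(\psi+\phi_A)>0$; Zhang's theorem then gives finiteness of the right-hand essential minimum. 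You cite Zhang only for the \emph{lower} bound on the essential minimum, where it is unnecessary --- any integral section $s\in H^0(\cX,m\cL)$ already yields $\text{ess-min }h_{\cL,\phi}\ge -\tfrac1m\log\sup|s|>-\infty$ by the very computation in (b) --- whereas it is the upper bound that genuinely needs it.
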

\begin{proof} Given a non-zero section $\sigma\in H^0(\cX,k\cL)$ and $x\in X(\overline K)$ such that $\sigma(x)\neq 0$ the height $h_{\cL,\phi}(x)$ is equal to the mean value of the function $\phi-\frac{1}{k}\log|\sigma|$ along the Galois orbit of $x$, and this easily implies that
$$
e_\ma(\overline{R(L)}^{\sup},\mathcal F)\le\text{ess-min } h_{\cL,\phi}.
$$
Hence the second assertion is a direct consequence of the first one.

In the following, we prove the first assertion.
When $\cL$ is relatively ample and $\phi$ is smooth with curvature $dd^c\phi>0$ the finiteness of $\text{ess-min }h_{\cL,\phi}$ follows from the much more precise result~\cite{Zha95} Theorem 5.2. In the general case we may first find a smooth conjugation invariant weight $\psi$ on $L_\C$ such that $\phi\le\psi$ since $\phi$ is continuous, and it follows that
$$
h_{\cL,\phi}\le h_{\cL,\psi}
$$
on $X(\overline K)$. Now let $\cA$ be a relatively ample line bundle on $\cX$ and endow $A_\C$ with a smooth conjugation invariant weight $\phi_A$ such that $dd^c\phi_A>0$. Upon replacing $(\cA,\phi_A)$ with $(m\cA,m\phi_A)$ for some $m\gg 1$ we may assume that
\begin{itemize}
\item[(i)]  There exists a non-zero section $s\in H^0(\cX,\cA)$ with sup-norm at most $1$ with respect to $\phi_A$.
\item[(ii)] $dd^c\psi+dd^c\phi_A>0$.
\item[(iii)] $\cL+\cA$ is relatively ample.
\end{itemize}
Condition (i) means that the function $\phi_A-\log|s|$ is non-negative, which implies
$$
h_{\cL,\psi}\le h_{\cL+\cA,\psi+\phi_A}
$$
hence
$$
\text{ess-min } h_{\cL,\phi}\le\text{ess-min }h_{\cL+\cA,\psi+\phi_A}.
$$
By (ii) and (iii) the right-hand side is finite by Zhang's result and we are done.
\end{proof}

\begin{defi} Let $L$ be a big line bundle on $X/K$ and let $\overline V_\bu$ be an adelically normed graded linear series of $L$ such that:
\begin{itemize}
\item[(i)] $V_\bu$ contains an ample series.
\item[(ii)] The induced filtration by minima on $V_\bu$ is linearly right-bounded.
\end{itemize}
Given the choice of a system of parameters $z=(z_1,...,z_n)$ at a regular point of $X(\overline K)$, we define the \emph{concave transform}
$$
G_{\overline V_\bu}:\Delta(V_\bu)\to[-\infty,+\infty[
$$
of $\overline V_\bu$ as the concave transform of the filtration by minima given by Definition~\ref{defi:conc}, and the \emph{arithmetic Okoukov body} is then similarly defined by
$$
\hd(\overline V_\bu)=\{(x,t)\in\Delta(V_\bu)\times\R,\,0\le t\le G_{\overline V_\bu}(x)\}.
$$
\end{defi}

As a consequence of Theorem~\ref{thm:main} we shall prove:

\begin{thm}\label{thm:volar} Let $\overline V_\bu$ be an adelically normed graded linear series which contains an ample series and whose filtration by minima is linearly right-bounded. Then we have
$$
\vol\hd(\overline V_\bu)=\lim_{k\to\infty}\frac{\hdim_K\widehat{V}_k}{k^{n+1}}=(n+1)!\hvol(\overline V_\bu).
$$
\end{thm}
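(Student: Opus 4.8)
The plan is to chain together the three displayed equalities separately, since the middle expression $\lim_k \hdim_K\widehat{V}_k/k^{n+1}$ is the common pivot. First I would check that the hypotheses of Theorem~\ref{thm:main} and Corollary~\ref{cor:hdim} are met: the filtration by minima $\cF$ of $\overline V_\bu$ is multiplicative (stated after Definition~\ref{defi:adelgraded}), pointwise left-bounded (same place), and linearly right-bounded by assumption (ii); and $V_\bu$ contains an ample series by assumption (i). Hence Corollary~\ref{cor:hdim} applies and gives
$$
\vol(\hd(\overline V_\bu)) = \lim_{k\to\infty}\frac{\mas(V_k,\cF)}{k^{n+1}},
$$
recalling that $\hd(\overline V_\bu) = \hd(V_\bu,\cF)$ by definition. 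So the first equality $\vol\hd(\overline V_\bu) = \lim_k \hdim_K\widehat{V}_k/k^{n+1}$ reduces to showing $\mas(V_k,\cF) = \hdim_K\widehat{V}_k + o(k^{n+1})$.

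For that I would invoke Gillet-Soul\'e's estimate (\ref{equ:gs}): applied to the adelically normed $K$-vector space $\overline V_k$ (whose filtration by minima is right-bounded for each fixed $k$ because $\widehat{V}_k$ is finite — here one uses linear, hence pointwise, right-boundedness together with Proposition~\ref{prop:hfinite}), it yields
$$
\mas(V_k,\cF) = \hdim_K\widehat{V}_k + O(N_k\log N_k),
$$
where $N_k = \dim_K V_k$ and the implied constant depends only on $K$. Since $V_\bu$ contains an ample series, $N_k = O(k^n)$ by (\ref{equ:fuj}) (or simply $N_k \le \dim_K H^0(kL) = O(k^n)$), so $N_k\log N_k = O(k^n\log k) = o(k^{n+1})$. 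Dividing by $k^{n+1}$ and passing to the limit (the limit on the left exists by Corollary~\ref{cor:hdim}) gives the first equality, and incidentally shows that the $\limsup$ defining $\hvol$ is a genuine limit.

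The second equality $\lim_k \hdim_K\widehat{V}_k/k^{n+1} = (n+1)!\,\hvol(\overline V_\bu)$ is then essentially the definition: by Definition~\ref{defi:adelgraded}(ii), $\hvol(\overline V_\bu) = \limsup_k \frac{(n+1)!}{k^{n+1}}\hh(\overline V_k)$, and $\hh(\overline V_k) = \hdim_K\widehat{V}_k$ by the notation fixed in the Arakelov-geometric part of Section~\ref{sec:arith}; having just shown the $\limsup$ is a limit, the factor of $(n+1)!$ is the only remaining bookkeeping. I do not anticipate a serious obstacle here; the one point demanding a little care is the per-$k$ right-boundedness needed to even define $\mas(V_k,\cF)$ and to apply (\ref{equ:gs}), which must be extracted cleanly from Proposition~\ref{prop:hfinite} before the Gillet-Soul\'e bound can be quoted. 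Everything else is assembling Corollary~\ref{cor:hdim}, the estimate (\ref{equ:gs}), and the polynomial bound $N_k = O(k^n)$.
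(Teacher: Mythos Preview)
Your proposal is correct and follows essentially the same route as the paper: apply Corollary~\ref{cor:hdim} to get $\vol\hd(\overline V_\bu)=\lim_k\mas(V_k,\cF)/k^{n+1}$, then use Gillet--Soul\'e's estimate (\ref{equ:gs}) to replace $\mas(V_k,\cF)$ by $\hdim_K\widehat{V}_k$ up to an $O(N_k\log N_k)$ error, and finally absorb this error via $N_k=O(k^n)$. Your additional care in verifying the hypotheses (multiplicativity, pointwise left-boundedness, per-$k$ right-boundedness via Proposition~\ref{prop:hfinite}) is more explicit than the paper's write-up but entirely in the same spirit.
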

\begin{proof} By Corollary~\ref{cor:hdim} we have
$$
\vol\hd(\overline V_\bu)=\lim_{k\to\infty}\frac{\mas(V_k,\cF)}{k^{n+1}}
$$
where
$$
\mas(V_k,\cF)=\sum_{e_j(V_k,\cF)>0}e_j(V_k,\cF)=\hdim_K \widehat{V}_k+O(N_k\log N_k)
$$
by Definition~\ref{defi:jump} and Gillet-Soul\'e's result (\ref{equ:gs}). Since $N_k=\dim V_k=O(k^n)$ we have
$$
N_k\log N_k=O(k^n\log k)=o(k^{n+1})
$$
and the result follows.
\end{proof}

\subsection{Fujita approximation and log-concavity}
Let us first spell out Theorem~\ref{thm:fuj} in the adelic case.

\begin{thm}\label{thm:fujar} Let $\overline V_\bu$ be an adelically normed graded linear series which contains an ample series and whose filtration by minima is linearly right-bounded. Then for every $\e>0$ there exists a finitely generated subseries $W_\bu$ of $V_\bu$ such that
$$
\hvol(\overline W_\bu)\ge\hvol(\overline V_\bu)-\e.
$$
\end{thm}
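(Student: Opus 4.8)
The plan is to deduce Theorem~\ref{thm:fujar} directly from the geometric Fujita-type statement of Theorem~\ref{thm:fuj} together with the volume identity of Theorem~\ref{thm:volar}. The key observation is that the filtration by minima of $\overline V_\bu$ restricts to a filtration by minima on any graded subseries $W_\bu\subset V_\bu$, and that a finitely generated subseries $W_\bu$ automatically satisfies hypothesis (ii) of Theorem~\ref{thm:volar} (linear right-boundedness passes to subseries trivially, since $e_\ma(W_k,\cF)\le e_\ma(V_k,\cF)$ for all $k$) — but \emph{not} necessarily hypothesis (i), since a finitely generated $W_\bu$ need not contain an ample series.

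Concretely, I would argue as follows. Fix $\e>0$. By Theorem~\ref{thm:fuj} applied to $(V_\bu,\cF)$ with $\cF$ the filtration by minima, there is a finitely generated graded subalgebra $W_\bu$ of $V_\bu$ with $\vol\hd(W_\bu,\cF)\ge\vol\hd(V_\bu,\cF)-\e$. Inspecting the proof of Theorem~\ref{thm:fuj}, the subseries produced is of the form $W_\bu=S^\bu V_{\le p}$ for $p$ large; in particular, for $p$ large enough $W_\bu$ still contains an ample series (this is exactly the content of \cite{LM08} Theorem 3.5, which guarantees that $\vol(S^\bu V_{\le p})\to\vol(V_\bu)>0$, and more precisely the Fujita approximation in \emph{loc.\ cit.} is by subalgebras containing ample series). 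Thus $W_\bu$ meets both hypotheses of Theorem~\ref{thm:volar}, and we may rewrite the conclusion of Theorem~\ref{thm:fuj} in terms of arithmetic volumes:
$$
(n+1)!\,\hvol(\overline W_\bu)=\vol\hd(\overline W_\bu)=\vol\hd(W_\bu,\cF)\ge\vol\hd(V_\bu,\cF)-\e=(n+1)!\,\hvol(\overline V_\bu)-\e.
$$
Rescaling $\e$ gives the claim.

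The main obstacle — and the one point deserving care — is the bookkeeping around hypothesis (i): one must make sure the finitely generated subseries supplied by the proof of Theorem~\ref{thm:fuj} genuinely contains an ample series, so that Theorem~\ref{thm:volar} is applicable to $\overline W_\bu$ and the identity $\vol\hd(\overline W_\bu)=(n+1)!\hvol(\overline W_\bu)$ holds. This is not automatic for an arbitrary finitely generated $W_\bu$, but it is built into the Lazarsfeld--Musta\c{t}\v{a} construction $W_\bu=S^\bu V_{\le p}$ for $p\gg 1$, since then $V_\bu^t\supset (S^\bu V_{\le p})^t$ still contains an ample series for all $t<e_\ma(V_\bu,\cF)$ by Lemma~\ref{lem:contample}, and in particular $S^\bu V_{\le p}=(S^\bu V_{\le p})^{e_\mi}$ does. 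Once this is in place the proof is a one-line dictionary translation; no new analytic or arithmetic input is needed beyond what was established for Theorems~\ref{thm:fuj} and~\ref{thm:volar}.
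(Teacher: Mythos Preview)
Your approach is exactly the paper's: the authors simply write ``this follows directly from Theorem~\ref{thm:fuj} in view of Theorem~\ref{thm:volar}.'' You have correctly identified and addressed a point the paper leaves implicit, namely that Theorem~\ref{thm:volar} requires $W_\bu$ to contain an ample series.

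One small correction: your justification in the final paragraph for why $S^\bu V_{\le p}$ contains an ample series is muddled. The containment $V_\bu^t\supset (S^\bu V_{\le p})^t$ goes the wrong way for what you want, and the line $S^\bu V_{\le p}=(S^\bu V_{\le p})^{e_\mi}$ does not by itself yield an ample subseries. The clean argument is more direct: by Definition~\ref{defi:contample} there is an ample $\Q$-divisor $A$ with $H^0(kA)\subset V_k$ for all large divisible $k$, and since $A$ is ample the section ring $\bigoplus_k H^0(kA)$ is finitely generated; hence for $p$ large enough all its generators lie in $V_1+\cdots+V_p$, so $H^0(kA)\subset S^kV_{\le p}$ for all large divisible $k$ and $S^\bu V_{\le p}$ contains an ample series. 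With this fix your argument is complete.
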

Indeed this follows directly from Theorem~\ref{thm:fuj} in view of Theorem~\ref{thm:volar}.

We next get as usual the log-concavity of arithmetic volumes.
\begin{prop}Let $L$ and $M$ be two big line bundles on $X/K$. Assume that $\overline U_\bu$, $\overline V_\bu$ and $\overline W_\bu$ are adelically normed graded linear series of $L$, $M$ and $L+M$ respectively such that each of them contains an ample series and has a linearly right-bounded filtration by minima. Assume furthermore that
\begin{itemize}
\item[(i)]  $U_k\cdot V_k\subset W_k$ for each $k$.
\item[(ii)] For any $v\in\Sigma$ and all $s\in U_k$, $s'\in V_k$, one has
$\|s\cdot s'\|_v\le\|s\|_v\|s'\|_v$.
\end{itemize}

Then one has
\begin{equation}\label{Equ:loc-concavite}\hvol(\overline W_\bu)^{\frac{1}{n+1}}\ge
\hvol(\overline U_\bu)^{\frac1{n+1}}+\hvol(\overline V_\bu)^{\frac 1{n+1}}.
\end{equation}
\end{prop}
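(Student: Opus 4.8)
The strategy is to deduce the arithmetic log-concavity inequality from its geometric counterpart by passing through the concave transforms on the Okounkov bodies. The key is that, by Theorem~\ref{thm:volar}, we have
$$
\hvol(\overline U_\bu)=\frac{1}{(n+1)!}\vol\hd(\overline U_\bu),\qquad
\hvol(\overline V_\bu)=\frac{1}{(n+1)!}\vol\hd(\overline V_\bu),
$$
and similarly for $\overline W_\bu$, so it suffices to establish the corresponding inequality for the Euclidean volumes of the filtered Okounkov bodies $\hd(\overline U_\bu)\subset\Delta(U_\bu)\times\R$, $\hd(\overline V_\bu)\subset\Delta(V_\bu)\times\R$ and $\hd(\overline W_\bu)\subset\Delta(W_\bu)\times\R$, all computed with respect to a single regular system of parameters $z$ chosen simultaneously for $L$, $M$ and $L+M$ (so that the three Okounkov bodies live in the same $\R^n$ and the multiplication maps $H^0(kL)\otimes H^0(kM)\to H^0(k(L+M))$ are compatible with $\ord_z$, giving $\Delta(U_\bu)+\Delta(V_\bu)\subset\Delta(W_\bu)$ by superadditivity of $\ord_z$).

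\textbf{Step 1: translate hypotheses into an inequality of concave transforms.} From hypotheses (i) and (ii) and the definition of the filtration by minima, one checks that for $s\in\cF_a U_k$ and $s'\in\cF_b V_k$ the product $s\cdot s'$ lies in $\cF_{a+b}W_k$; hence at the level of Okounkov bodies of the truncated subseries one gets $\Delta(U_\bu^a)+\Delta(V_\bu^b)\subset\Delta(W_\bu^{a+b})$ for all $a,b\in\R$. Taking incidence functions, this yields
$$
G_{\overline W_\bu}(x+y)\ge G_{\overline U_\bu}(x)+G_{\overline V_\bu}(y)
$$
for all $x\in\Delta(U_\bu)$, $y\in\Delta(V_\bu)$. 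In particular the filtered Okounkov body $\hd(\overline W_\bu)$ contains the Minkowski sum $\hd(\overline U_\bu)+\hd(\overline V_\bu)$ inside $\R^{n+1}$.

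\textbf{Step 2: apply the Brunn--Minkowski inequality.} Since $\hd(\overline U_\bu)$ and $\hd(\overline V_\bu)$ are convex bodies in $\R^{n+1}$ (convexity of the half-epigraphs follows from concavity of $G_{\overline U_\bu}$, $G_{\overline V_\bu}$, established after Definition~\ref{defi:conc}), the classical Brunn--Minkowski inequality gives
$$
\vol\!\big(\hd(\overline U_\bu)+\hd(\overline V_\bu)\big)^{\frac{1}{n+1}}
\ge\vol\hd(\overline U_\bu)^{\frac{1}{n+1}}+\vol\hd(\overline V_\bu)^{\frac{1}{n+1}},
$$
and combining with the inclusion from Step~1 and the identity from Theorem~\ref{thm:volar} yields (\ref{Equ:loc-concavite}) after multiplying through by $((n+1)!)^{-1/(n+1)}$.

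\textbf{Main obstacle.} The only genuinely delicate point is the bookkeeping in Step~1: one must verify that a \emph{single} admissible choice of infinitesimal flag $z$ (centered at a regular point of $X(\overline K)$ lying outside the base loci of the relevant linear series and of the augmented base loci in the Kodaira decompositions for $L$, $M$, $L+M$) works simultaneously for the three linear series, so that the superadditivity $\ord_z(s\cdot s')\ge_{\mathrm{lex}}\ord_z(s)+\ord_z(s')$ gives the Minkowski-sum inclusion of Okounkov bodies, and that the truncations $U_\bu^a$, $V_\bu^b$ still contain ample series for $a<e_\ma(U_\bu,\cF)$, $b<e_\ma(V_\bu,\cF)$ (Lemma~\ref{lem:contample}) so that equality~(\ref{equ:fuj}) and Corollary~\ref{cor:hdim} apply. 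Everything else is a formal consequence of Brunn--Minkowski, exactly parallel to the classical proof of log-concavity of the geometric volume in~\cite{LM08}.
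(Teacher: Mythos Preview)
Your proof is correct and follows essentially the same route as the paper's: from hypotheses (i) and (ii) one obtains $\cF_a U_k\cdot\cF_b V_k\subset\cF_{a+b}W_k$, hence the Minkowski-sum inclusion $\hd(\overline U_\bu)+\hd(\overline V_\bu)\subset\hd(\overline W_\bu)$, and then Brunn--Minkowski together with Theorem~\ref{thm:volar} gives the result. The paper states this in two lines without passing explicitly through the concave transforms or the inclusion $\Delta(U_\bu^a)+\Delta(V_\bu^b)\subset\Delta(W_\bu^{a+b})$, but your expanded version is the natural unpacking of that step.

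One minor remark: your ``main obstacle'' is slightly overstated. The valuation $\ord_z$ is fixed once at the outset of Section~\ref{sec:okounkov} and is a genuine valuation, so $\ord_z(s\cdot s')=\ord_z(s)+\ord_z(s')$ (equality, not just $\ge_{\mathrm{lex}}$); there is no need to avoid base loci for the construction of the Okounkov bodies themselves. Also, Brunn--Minkowski does not require convexity of the sets, so the concavity of $G$ is not strictly needed for that step (though it is of course true).
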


\begin{proof}
The assumptions easily imply that
$$
\cF_s U_k\cdot\cF_t V_k\subset\cF_{s+t} W_k
$$
for all $k$ and all $s,t\in\R$, which yields in turn
$$
\hd(\overline U_\bu)+\hd(\overline V_\bu)\subset\hd(\overline W_\bu)
$$
and the result follows by the Brunn-Minkowski inequality.
\end{proof}

\subsection{Convergence of the Euler-Poincar\'e characteristic}
Given an adelically normed $N$-dimensional $K$-vector space $\overline E$ we consider as in~\cite{RLV00,Gau08,Yua09} its (normalized) \emph{adelic Euler characteristic}
\begin{equation}\label{equ:chi0}
\chi(\overline E):=\frac{1}{[K:\Q]}\log\vol \B(\overline E)\in]-\infty,+\infty]
\end{equation}
where
$\B(\overline E)\subset E\otimes_K\A_K$ denotes the adelic unit ball induced by the family of norms $\|\cdot\|_v$ of $\overline E$ and $\vol$ is the Haar measure of $E\otimes_K\A_K$ normalized by
$$
\vol(E\otimes_K\A_K/E)=1
$$
(compare this normalization to~\cite{BG06} Proposition C.1.10). If $\overline E$ is \emph{generically trivial} (i.e.~an adelic vector bundle in the sense of~\cite{Gau08}, cf.~Remark~\ref{rem:otherdef}) then
$\chi(\overline E)$ is finite and we have by definition the Riemann-Roch-type formula
$$
\chi(\overline E)=\hdeg_n(\overline E)+\chi(\overline{K^N})
$$
where $\hdeg_n$ denotes the \emph{normalized adelic degree} of~\cite{Gau08} D\'efinition 4.1 and $\overline{K^N}$ is adelically normed in the trivial ($\ell^2$) way. A computation shows that $\chi(\overline{K^N})=O(N\log N)$, so that $\chi$ and $\hdeg_n$ coincide modulo $O(N\log N)$.

Note that $\chi(\overline E_\al)\le\chi(\overline E)$ but equality doesn't hold in general. In fact $\overline E_\al$ is locally trivial when $\widehat{E}$ if finite by Proposition~\ref{prop:hfinite}. We thus have $\chi(\overline E_\al)<+\infty$ in that case, but it might still be the case that $\chi(\overline E)=+\infty$.

When $\widehat{E}$ is finite the adelic version of Minkowski's second theorem applies to $\overline E_\al$ (\cite{BG06} Theorem C.2.11) and yields
\begin{equation}\label{equ:mink}
\chi(\overline E_\al)=\sum_{j=1}^Ne_j(\overline E)+O(N\log N)
\end{equation}
where the constant in $O$ only depends on $K$ (recall that $e_j(\overline E_\al)=e_j(\overline E)$). As a consequence we will show similarly to Theorem~\ref{thm:volar}:

\begin{thm}\label{thm:chi} Let $\overline V_\bu$ be as in Theorem~\ref{thm:volar} an adelically normed graded linear series of $L$ which contains an ample series and whose filtration by minima is linearly right-bounded. If the filtration by minima is furthermore linearly left-bounded (e.g. if $V_\bu$ is finitely generated) then we have
$$\int_{\Delta(V_\bu)}G_{\overline V_\bu}d\lambda=\lim_{k\to\infty}\frac{\chi(\overline V_{k,\al})}{k^{n+1}}
$$
\end{thm}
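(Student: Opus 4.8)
The plan is to mimic the proof of Theorem~\ref{thm:volar}, replacing the Gillet--Soul\'e mass estimate~(\ref{equ:gs}) by the Minkowski-type estimate~(\ref{equ:mink}). The only genuinely new ingredient needed is a version of Theorem~\ref{thm:main} that allows us to integrate the \emph{unbounded} test function $f(t)=t$ (without truncating at $0$) against the measures $\mu_k$; this is where the linear left-boundedness hypothesis enters. So the first step is to recall, via~(\ref{equ:mink}) applied to $\overline V_{k,\al}$, that
$$
\chi(\overline V_{k,\al})=\sum_{j=1}^{N_k}e_j(V_k,\cF)+O(N_k\log N_k)
=k^{n+1}\int_\R t\,\mu_k(dt)+o(k^{n+1}),
$$
using $N_k=O(k^n)$ exactly as in the proof of Theorem~\ref{thm:volar}. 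Thus the theorem reduces to showing
$$
\lim_{k\to\infty}\int_\R t\,\mu_k(dt)=\int_\R t\,\bigl(G_{\overline V_\bu}\bigr)_*\lambda(dt)=\int_{\Delta(V_\bu)}G_{\overline V_\bu}\,d\lambda,
$$
the last equality being the definition of push-forward.

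The second step is to produce this convergence from Theorem~\ref{thm:main} together with uniform integrability. By Lemma~\ref{lem:subadd} and the upper bound in~(\ref{equ:bounds}) both $\mu_k$ and the limit measure are supported in $]-\infty,e]$ with $e:=e_\ma(V_\bu,\cF)<+\infty$, so only the left tail is at issue. Here the linear left-boundedness gives a constant $C>0$ with $e_\mi(V_k,\cF)\ge -Ck$, hence $k^{-1}e_j(V_k,\cF)\ge -C$ for all $j$ and all $k$: every $\mu_k$ is in fact supported in the fixed compact interval $[-C,e]$. Then the weak convergence $\mu_k\to\bigl(G_{(V_\bu,\cF)}\bigr)_*\lambda$ of Theorem~\ref{thm:main}, applied to a continuous compactly supported function that agrees with $t$ on $[-C,e]$, yields the desired convergence of $\int t\,\mu_k(dt)$. (When $V_\bu$ is finitely generated, Lemma~\ref{lem:fingen} supplies the linear left-boundedness automatically, which is why that case is singled out.) Note also that $-C\le e_\mi(V_\bu,\cF)\le G_{\overline V_\bu}\le e$ on $\Delta(V_\bu)$ by~(\ref{equ:bounds}), so the right-hand integral $\int_{\Delta(V_\bu)}G_{\overline V_\bu}\,d\lambda$ is a well-defined finite number, $\lambda$ being a finite measure on the convex body $\Delta(V_\bu)$.

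The main obstacle, and the reason the extra hypothesis cannot be dropped, is precisely the control of the negative jumping values: without linear left-boundedness the scaled minima $k^{-1}e_{N_k}(V_k,\cF)$ could march off to $-\infty$, the test function $t$ is then unbounded below on the relevant range, and there is no dominated-convergence argument available to pass to the limit in $\int t\,\mu_k(dt)$ — indeed the limit could fail to exist or be $-\infty$ while the right-hand side stays finite. Everything else is bookkeeping: the reduction through~(\ref{equ:mink}) is the same $O(N_k\log N_k)=o(k^{n+1})$ estimate already used for Theorem~\ref{thm:volar}, and the passage from weak convergence to convergence of the mean is the standard fact that weak convergence plus uniform compact support implies convergence of integrals of any bounded continuous integrand.
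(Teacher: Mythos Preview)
Your proposal is correct and follows essentially the same approach as the paper: reduce via Minkowski's second theorem~(\ref{equ:mink}) to the convergence of $\int_\R t\,\mu_k(dt)$, then use the linear left- and right-boundedness to confine the supports of all $\mu_k$ to a fixed compact interval, so that Theorem~\ref{thm:main} applied to a compactly supported continuous extension of $t$ gives the result. The paper's proof is terser but the structure is identical.
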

\begin{proof} Set as in Theorem~\ref{thm:main}
$$
\mu_k:=k^{-n}\sum_j\delta_{k^{-1}e_j(V_k,\cF)}.
$$
By Minkowski's second theorem (\ref{equ:mink}) we have
$$
\frac{\chi(\overline V_{k,\al})}{k^{n+1}}=\int_\R t \mu_k(dt)+o(1).
$$
On the other hand since the filtration by minima is assumed to be both left and right-bounded the support of $\mu_k$ stays in a fixed compact set independent of $k$ and Theorem~\ref{thm:main} therefore yields
$$
\lim_{k\to\infty}\int_\R t \mu_k(dt)=\int_{\Delta(V_\bu)}G_{\overline V_\bu}d\lambda,
$$
which proves the result.
\end{proof}

\section{Application to the sectional capacity}\label{sec:seccapa}

In this section, we shall associate to each adelically normed vector space another tyoe of filtration, the \emph{filtration by height} and use it to study the existence of sectional capacity for adelically normed graded linear series. Again $K$ denotes a number field.

\subsection{Filtration by height}
As in~\cite{Gau08} D\'efinition 4.11 we introduce:
\begin{defi}
Let $\overline E$ be an adelically normed vector space. The \emph{height} of a non-zero element $x\in E$ is defined by
$$
h_{\overline E}(x):=\sum_{v}\frac{[K_v:\mathbb Q_v]}{[K:\mathbb Q]}\log\|x\|_v\in[-\infty,+\infty[
$$
(recall that $\|x\|_v\le 1$ for all but finitely many $v$'s).
The \emph{filtration by height} is defined as
$$
\widetilde{\cF}_tE:=\mathrm{Vect}_K\{x\in E,\, h_{\overline E}(x)\le -t\}.
$$
\end{defi}

Note that the filtration by height is always left-bounded (given a basis $(x_1,...,x_N)$ of $E$ we have $\widetilde{\cF}_tE=E$ for $t\ge -\min_j h_{\overline E}(x_j)$). It is right-bounded if $\overline E$ is generically trivial (i.e.~an adelic vector bundle over $\Spec K$). This is a consequence of Northcott's theorem in the adelic vector bundle setting, see \cite{Gau08} Proposition 4.12. 

By~\cite{Gau08} Lemme 4.4 we have $h_{\overline{E}}(x)=-\hdeg_n(Kx)$, and it follows that $e_{\max}(E,\widetilde{\mathcal F})$ coincides with $\mathrm{u\widehat{deg}}_n(\overline E)$, the (normalized) upper degree of $\overline E$, introduced and studied in \cite{BK07}.

Using Definition \ref{defi:jump} it is easy to check that
\begin{equation}\label{equ:inf}
\sum_{j=1}^N e_j(E,\widetilde{\mathcal F})=-\inf_{(x_j)}\sum_{j}h_{\overline E}(x_j)\in\mathbb R\cup\{-\infty\}, 
\end{equation}
where $(x_j)_{j=1}^N$ runs over the set of all bases of $E$.

By Proposition 4.13 and Theorem 4.14 of~\cite{Gau08} (a generalized Siegel lemma) for any \emph{generically trivial} adelically normed vector space $\overline E$ we have
\begin{equation}\label{equ:siegel}
\chi(\overline E)=\sum_{j=1}^Ne_j(E,\widetilde{\cF})+O(N\log N)
\end{equation}
where the constant in $O$ only depends on $K$.

\begin{defi}
Let $\overline V_{\bu}$ be an adelically normed graded linear series of $L$ such that:
\begin{itemize}
\item[(i)] $V_{\bu}$ contains an ample series.
\item[(ii)] Each graded piece $\overline V_k$ is generically trivial, and the filtration by height of $V_\bu$ is furthermore linearly right-bounded. 
\end{itemize}
Given a system of parameters $z$ at a regular point of $X(\overline K)$, we define 
$$
\widetilde{G}_{\overline{V}_\bu}:\Delta(V_\bu)\rightarrow[-\infty,+\infty[
$$
as the concave transform of the filtration by height.
\end{defi}

Similarly to Theorem \ref{thm:chi}, we express the sectional capacity of $\overline{V}_\bu$ as the integral of the function $\widetilde{G}_{\overline V_\bu}$.

\begin{thm}\label{thm:e-p}
Let $\overline{V}_\bu$ be an adelically normed graded linear series of $L$ which contains an ample series and such that each graded piece $\overline V_k$ is generically trivial. Assume that the filtration by heights of $\overline V_\bu$ is linearly left and right bounded. Then we have
\[\int_{\Delta(V_\bu)}\widetilde G_{\overline V_\bu}\,d\lambda
=\lim_{k\rightarrow\infty}\frac{\chi(\overline V_{k})}{k^{n+1}}.\]
\end{thm}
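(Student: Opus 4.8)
The plan is to mirror exactly the proof of Theorem~\ref{thm:chi}, replacing the filtration by minima with the filtration by height $\widetilde{\cF}$ throughout. First I would observe that under the stated hypotheses the triple $(X,L,\widetilde{\cF})$ satisfies all the assumptions of Theorem~\ref{thm:main}: the filtration $\widetilde{\cF}$ is multiplicative on $V_\bu$ (which follows from the submultiplicativity $\|s\cdot s'\|_v\le\|s\|_v\|s'\|_v$ of the norms, since then $h_{\overline V_{k+m}}(s\cdot s')\le h_{\overline V_k}(s)+h_{\overline V_m}(s')$), it is pointwise left-bounded because each filtration by height is left-bounded as noted after its definition, and it is linearly right-bounded by hypothesis (ii). Hence Theorem~\ref{thm:main} applies to $\widetilde{\cF}$, giving weak convergence of the scaled jump measures $\mu_k:=k^{-n}\sum_j\delta_{k^{-1}e_j(V_k,\widetilde{\cF})}$ to $\bigl(\widetilde G_{\overline V_\bu}\bigr)_*\lambda$.

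The next step is the arithmetic input: since each $\overline V_k$ is generically trivial, the generalized Siegel lemma~(\ref{equ:siegel}) gives
$$
\chi(\overline V_k)=\sum_{j=1}^{N_k}e_j(V_k,\widetilde{\cF})+O(N_k\log N_k),
$$
with $N_k=\dim_K V_k$ and the constant depending only on $K$. Dividing by $k^{n+1}$ and using $N_k=O(k^n)$, so that $N_k\log N_k=o(k^{n+1})$, we obtain
$$
\frac{\chi(\overline V_k)}{k^{n+1}}=\int_\R t\,\mu_k(dt)+o(1).
$$
Because the filtration by height is assumed linearly left \emph{and} right bounded, the bounds~(\ref{equ:bounds}) show that the supports of all $\mu_k$ lie in a single fixed compact interval of $\R$ independent of $k$; consequently testing the weak convergence of $\mu_k$ against a continuous compactly supported function that agrees with $t\mapsto t$ on that interval yields $\int_\R t\,\mu_k(dt)\to\int_{\Delta(V_\bu)}\widetilde G_{\overline V_\bu}\,d\lambda$. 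Combining the two displays finishes the proof.

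The only point requiring a little care — and the natural candidate for the main obstacle — is verifying that the hypotheses of this theorem really do feed into all the earlier machinery: specifically, that "linearly right-bounded filtration by height'' together with "$V_\bu$ contains an ample series'' is enough to invoke Lemma~\ref{lem:contample} (and hence the key equidistribution Theorem~\ref{thm:main}) for $\widetilde{\cF}$ rather than for a filtration by minima. This is fine because Lemma~\ref{lem:contample}, Definition~\ref{defi:conc} and Theorem~\ref{thm:main} are stated for an \emph{arbitrary} multiplicative, pointwise-left-bounded, linearly-right-bounded filtration, and we have just checked $\widetilde{\cF}$ is of this type; no property specific to minima is used. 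Likewise, the finiteness of $\chi(\overline V_k)$ needed for~(\ref{equ:siegel}) to be a genuine equality (rather than $\pm\infty$) is guaranteed by generic triviality of $\overline V_k$. So the argument is essentially a transcription, with the generalized Siegel lemma playing the role that Minkowski's second theorem played in Theorem~\ref{thm:chi}.
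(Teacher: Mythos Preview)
Your proof is correct and follows precisely the approach the paper takes: the paper's own proof is the single sentence ``follows the same line as that of Theorem~\ref{thm:chi} in replacing Minkowski's second theorem by~(\ref{equ:siegel}),'' and you have faithfully unpacked exactly that. One tiny remark: the uniform compact support of the $\mu_k$ comes directly from the definition of linear left/right boundedness (i.e.\ $k^{-1}e_j(V_k,\widetilde\cF)\in[-C,C]$), not from~(\ref{equ:bounds}), which concerns $G$ rather than the jump values --- but this is a citation nitpick, not a mathematical gap.
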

\begin{proof}
The proof follows the same line as that of Theorem \ref{thm:chi} in replacing the Minkowski's second theorem by (\ref{equ:siegel}). 
\end{proof}

\begin{rem}Let $\overline V_\bu$ be an adelically normed graded linear series of $L$.
\begin{itemize}
\item[(i)] As shown by Lemma \ref{lem:fingen}, the filtration by heights of $\overline V_\bu$ is linearly left-bounded provided that it is pointwise left-bounded and that the algebra $V_\bu$ is finitely generated. This condition is strongly similar to the increasing speed condition figuring in the Theorem (A) of \cite{RLV00}.
\item[(ii)] In the Arakelov-geometric setting, the filtration by height is automatically linearly right-bounded. In fact, one has \[e_{\max}(\overline V_\bu,\widetilde{\mathcal F})\le e_{\max}(\overline{R(L)}^{\sup},\widetilde{\mathcal F}),\] where the latter is bounded from above by the essential minimum of the height function on $X$ (which is finite by Lemma~\ref{lem:linbded}). 
\end{itemize}
\end{rem}

\section{Comparison with other results}\label{sec:yuan}

\subsection{Asymptotic measures}
We first compare our results with some of the first-named author's previous results. In~\cite{Che07,Che08b} the author constructed the \emph{asymptotic measure} of a big line bundle $L$ endowed with Arakelov-geometric data $\overline L$ as above. This measure describes the asymptotic distribution of the jumping values of the \emph{Harder-Narasimhan filtration} (see~\cite{Che07} Section 2.2). Now a comparison of the latter filtration with the filtration by minima considered above as in~\cite{Che08b} shows that the asymptotic measure coincides with the limit measure we obtain in Theorem~\ref{thm:main}, i.e.~with the direct images of $\lambda$ (by $G_{\overline V_\bu}$ and by $\widetilde G_{\overline V_\bu}$). A special case of this general phenomenon appears in the example computed in~\cite{Che07}  Section 4.1.5.

\subsection{Sectional capacity}Ê
We now relate Theorem~\ref{thm:main}  to Lau-Rumely-Varley's results~\cite{RLV00} on the  \emph{sectional capacity}.

Given an arbitrary adelically normed graded linear series $\overline V_\bu$ the quantity
$$
\exp\left(-(n+1)!\lim_{k\to\infty}\frac{\chi(\overline V_k)}{k^{n+1}}\right)
$$
is called the \emph{sectional capacity} in~\cite{RLV00}, provided the limit exists, and indeed the main result of~\cite{RLV00}Ê shows the existence of this limit when $V_\bu=R(L)$ with $L$ ample. Theorem~\ref{thm:e-p} therefore yields in paerticular a new (and much shorter) proof of the existence of the sectional capacity in the Arakelov-geometric setting when $L$ is ample on $X$ (since $R(L)$ is finitely generated in that case).

\subsection{Yuan's construction}
As mentioned in the introduction, given a big line bundle $L$ endowed with Arakelov-geometric data $\overline L$ Yuan constructs in his recent paper~\cite{Yua09} a concave function $-c[\overline L]$ on the Okounkov body $\Delta(L)$ whose mean value (under adequate assumptions) computes the sectional capacity. His construction consists in summing up the analogues of Witt-Nystr\"om's Chebyshev-type transforms over all places of $K$. We would like here to give an alternative description of his construction. We will then show in the next section that Yuan's function does \emph{not} coincide in general with our concave transform.

Let $L$ be a big line bundle on $X/K$ and let $\overline V_\bu$ be an adelically normed graded linear series of $L$. Assume furthermore that each adelically normed $K$-vector space $\overline V_k$ is \emph{locally trivial} (cf.~Remark~\ref{rem:otherdef}), so that its adelic Euler characteristic $\chi(\overline V_k)$ is finite.

Given a system of parameters $z=(z_1,...,z_n)$ at a given regular point $p\in X(\overline K)$ the valuation $\ord_z$ induces an $\N^n$-indexed filtration of each $V_k$. Let $\cG_{k,\a}$, $\a\in\N^n$, be the associated graded pieces, i.e.
$$
\cG_{k,\a}:=\{s\in V_k,\,\ord_z(s)\ge\a\}/\{s\in V_k,\,\ord_z(s)>\a\},
$$
and endow them with the induced adelically normed vector space structure $\overline{\cG}_{k,\a}$. Each $\cG_{k,\a}$ is at most one-dimensional by the basic property (\ref{equ:graded}) of the valuation $\ord_z$, and we have $\cG_{k,\a}\neq 0$ iff $(\a,k)\in\Gamma(V_\bu)$. Let $h:\Gamma(V_\bu)\to\R$ be defined by
\begin{equation}\label{equ:h}
h(k,\a):=\hdeg_n(\overline{\cG}_{k,\a}),
\end{equation}
where $\hdeg_n$ denotes as before Gaudron's normalized adelic degree (\cite{Gau08} D\'efinition 4.1). Since $\cG_{k,\a}$ is one-dimensional we have the usual description
\begin{equation}\label{equ:hdeg}
\hdeg_n(\overline{\cG}_{k,\a})=\frac{1}{[K:\Q]}\sum_v\log\|s\|_v^{-1}
\end{equation}
for any non-zero $s\in\cG_{k,\a}$ (\cite{Gau08} Lemme 4.4). One easily infers from this that $h$ is super-additive, and we have
\begin{equation}\label{equ:chi}
\chi(\overline V_k)=\sum_{\a\in\Gamma_k}h(k,\a)+O(k^n\log k)
\end{equation}
with $\Gamma_k:=\Gamma(V_\bu)\cap(\{k\}\times\N^n)$. Indeed this follows from~\cite{Gau08} Proposition 4.22, which shows that $\hdeg_n$ is additive in exact sequences modulo $O(N\log N)$, using also $\hdeg_n=\chi+O(N\log N)$.

Now assume that $V_\bu$ contains an ample series, so that the semigroup $\Gamma(V_\bu)$ spans $\Z^{n+1}$ as a group. Assume also that there exists $C>0$ such that
\begin{equation}\label{equ:bound}
-Ck\le h(k,\a)\le C(k+|\a|)
\end{equation}
for all $(k,\a)\in\Gamma(V_\bu)$. By~\cite{WN09} Section 3 the upper bound in (\ref{equ:bound}) enables to consider the concave envelope
$$
H_{\overline V_\bu}:\Delta(V_\bu)\to\R
$$
of the super-additive function $h$ (compare Remark~\ref{rem:wn} above), which satisfies
\begin{equation}\label{equ:env}
H_{\overline V_\bu}(\a)=\lim_{k\to\infty}\frac{1}{k}h(k,\a_k)
\end{equation}
for any sequence $(k,\a_k)\in\Gamma(V_\bu)$ such that $\frac{\a_k}{k}\to\a\in\Delta(V_\bu)^\circ$, and is in particular bounded above and below by (\ref{equ:bound}). One then shows exactly as in~\cite{WN09}  Section 8.2 that (\ref{equ:chi}) implies
\begin{equation}\label{equ:int}
\int_{\Delta(V_\bu)}H_{\overline V_\bu}d\lambda=\lim_{k\to\infty}\frac{\chi(\overline V_k)}{k^{n+1}}.
\end{equation}

Let us now assume that $V_\bu=R(L)$ is endowed with the adelic norms $\overline{R(L)}^{\sup}$ coming from the Arakelov-geometric setting. This is essentially the setting considered in~\cite{Yua09}. In that case Yuan's $F[m\overline L]$ satisfies
$$
F[m\overline L](\a)=-h(m,\a)
$$
for any $(m,\a)\in\Gamma(L)$, as one easily sees using (\ref{equ:hdeg}). The upper bound in (\ref{equ:bound}) is then shown to hold true in~\cite{Yua09} Lemma 2.3, and is the counterpart in this setting of Lemma~\ref{lem:linbded}. On the other hand, the lower bound is \emph{assumed} to be true in~\cite{Yua09} Theorem 1.2, and (\ref{equ:int}) is then equivalent to Yuan's result, since
$$
c[\overline L]=-H_{\overline V_\bu}.
$$
Yuan conjectures that this lower bound always holds (in the Arakelov-geometric setting), and notes that it is the case when $(X,L)=(\PP^n,\cO(1))$ (\cite{Yua09} P.18). We observe here that the lower bound holds more generally as soon as $V_\bu$ is finitely generated (a counterpart in this setting of Lemma~\ref{lem:fingen}). In particular Yuan's conjecture is true when $L$ is semiample.

\subsection{A counterexample}
In view of Theorem~\ref{thm:chi} and Yuan's result (\ref{equ:int}), it is certainly tempting to imagine that Yuan's function coincides with our concave transform. This is however not the case as we shall now see.

Let $K:=\Q$, $X:=\PP^1$ and $L:=\cO(1)$, endowed with the Arakelov-geometric data $\overline L$ given by the standard model $(\cX,\cL)$ of $(X,L)$ over $\Z$ and the Fubiny-Study metric on $L_\C$.

Given a point $p\in X(\Q)$ we may then consider the valuation $\ord_p$, which doesn't depend on the specific choice of a coordinate in this one-dimensional situation. The Okounkov body $\Delta=\Delta(L)$ of $L$ is then equal to the unit-segment $[0,1]\subset\R$ for any choice of $p$ (cf.~\cite{LM08} Example 1.13).

Since $\overline L$ is arithmetically ample, we have on the one hand $e_\mi(\overline L)\ge 0$, hence $G_{\overline L}\ge 0$ by (\ref{equ:bounds}).

On the other hand pick $m\in\Z$ and choose $p=[m:1]$ in homogeneous coordinates. Under the standard identification $H^0(\cX,k\cL)=\Z[X,Y]_k$ with homogeneous polynomials of degree $k$ we have
$$
\{s\in H^0(kL),\,\ord_p(s)\ge\a\}=(X-mY)^\a\Z[X,Y]_{k-\a}
$$
hence the function $h:\Gamma(\overline L)\to\R$ defined by (\ref{equ:h}) satisfies
$$
h(k,k)=\deg_n(\overline{\Q\cdot(X-mY)^k})=-\log\|(X-mY)^k\|_{\sup}
$$
$$
=-\log\sup_{(x,y)\neq(0,0)}\frac{(x-my)^k}{(x^2+y^2)^{k/2}}=-\frac{k}{2}\log(1+m^2).
$$
By (\ref{equ:env}) Yuan's function $H_{\overline L}:[0,1]\to\R$ satisfies
$$
H_{\overline L}(1)=-\frac{1}{2}\log(1+m^2).
$$
We thus conclude that the functions corresponding to any point $p:=[m:1]$ with $m\neq 0$ satisfy
$$
\inf_{\Delta(L)} H_{\overline L}<0\le\inf_{\Delta(L)} G_{\overline L}
$$
which shows indeed that they cannot be equal.

\subsection{The toric case}
Let us briefly discuss the toric case. Let $X$ be a smooth toric variery and $L$ be a toric big line bundle with associated polytope $\Delta$. Note that $(X,L)$ is automatically defined over $\Z$. The toric line bundle $L_\C$ is canonically trivialized on the torus $(\C^*)^n\subset X(\C)$, and a continuous invariant metric on $L$ therefore defines a function on $(\C^*)^n$ of the form
$g(\log|z_1|,...,\log|z_n|)$. The Legendre transform
$$
g^*(t):=\sup_{s\in\R^n}\left(\langle s,t\rangle-g(s)\right)
$$
is a continuous convex function on $\Delta$ (known as the \emph{symplectic potential} when $\phi$ is smooth and positively curved). If one uses a toric system of parameters $z$ at a toric point $p$ of $X$ then the Okounkov body of $L$ coincides with the polytope $\Delta$ (cf.~\cite{LM08} Section 6.1), and one can then check in this specific case that both our concave transform $G_{\overline L}$ and Yuan's function $H_{\overline L}$ coincide with $-g^*$. Note that a similar construction appears also in the recent work of Burgos, Philippon and Sombra \cite{BPS09}.

We are not going to prove this, but only indicate the key points. The first idea is to replace as in~\cite{WN09}Ê Section 9.3 the sup-norm by the $L^2$-norm with respect to a volume form invariant under the compact torus. These norms are not sub-multiplicative anymore, but one can still consider the functions $g$ and $h$ defined by (\ref{equ:g}) and (\ref{equ:h}), whose concave envelope will still compute $G_{\overline L}$ and $H_{\overline L}$ by the usual argument that the distortion between the sup-norm and the $L^2$-norm has subexponential growth. Now the decomposition
$$
H^0(kL)=\bigoplus_{\a\in k\Delta}\Q s_\a
$$
in monomials $s_\a$ is both orthogonal with respect to the $L^2$-scalar product and defined over $\Z$, and this enables to perform the computation of the functions $g$ and $h$ explicitely. One then concludes exactly as in~\cite{WN09} Lemma 9.2.


\begin{thebibliography}{widestlabel}

\addcontentsline{toc}{chapter}{Bibliography}

\bibitem[AB95]{AB95} A.Abbes, T.Bouche: Th{\'e}or{\`e}me de Hilbert-Samuel ``arithm{\'e}tique''. Ann. Inst. Fourier {\bf 45} (1995) no. 2, 375--401.

\bibitem[BG06]{BG06} E.Bombieri, W.Gubler: Heights in Diophantine geometry.
New Mathematical Monographs {\bf 4} (2006) Cambridge University Press.

\bibitem[BK07]{BK07} J.-B.Bost, K.K\"{u}nnemann: Hermitian vector bundles and extension groups on arithmetic schemes. I. Geometry of numbers. Preprint (2007) arXiv:math/0701343. To appear in Adv. of Math.

\bibitem[BPS09]{BPS09} J.Burgos Gil, P. Philippon, M. Sombra: Height of toric subschemes and Legendre-Fenchel duality. C.R.A.S. {\bf 347} (2009), no. 11-12, 589-594.

\bibitem[Che07]{Che07} H.Chen: Convergence of Harder-Narasimhan polygons. Preprint (2007) arXiv:0706.2649. To appear in M\'em. S.M.F.


\bibitem[Che08]{Che08b} H.Chen: Arithmetic Fujita aproximation. Preprint (2008) arXiv:0810.5479.

\bibitem[Gau08]{Gau08} E.Gaudron: Pentes des fibr\'es vectoriels ad\'eliques sur un corps global. Rend. Semin. Mat. Univ. Padova {\bf 119}  (2008), 21--95.

\bibitem[Gau09]{Gau09} E.Gaudron: G\'eom\'etrie des nombres et lemme
  de {S}iegel g\'en\'eralis\'es. Manuscripta Math. (2009). 
  
\bibitem[GS91]{GS91} H.~Gillet, C.~Soul{\'e}: On the number of lattice points in convex symmetric bodies and their duals. Israel J. Math. {\bf 74} (1991) no. 2, 347--357.

\bibitem[GS92]{GS92} H.~Gillet, C.~Soul{\'e}: An arithmetic Riemann-Roch theorem.  Invent. Math.  {\bf 110}  (1992),  no. 3, 473--543.

\bibitem[Kho93]{Kho93} A. Khovanskii: Newton polyhedron, Hilbert polynomial and sums of Þnite sets. Funct. Anal. Appl. {\bf 26} (1993), 331--348.

\bibitem[KK08]{KK08} K.Kaveh, A.G.Khovanskii: Algebraic equations and convex bodies. Preprint (2008) arXiv:0812.4688. To appear in Perspectives in Analysis, Topology and Geometry, Birkh\"auser series Progress in Mathematics (in honor of Oleg Viro).

\bibitem[KK09]{KK09} K.Kaveh, A.G.Khovanskii: Newton convex bodies, semigroups of integral points, graded algebras and intersection theory. Preprint (2009) arXiv:0904.3350.
    
\bibitem[LM08]{LM08} R.K.Lazarsfeld, M.Musta\c{t}\v{a}: Convex bodies associated to linear series. Ann. de l'ENS {\bf 42} (2009), no.~5, 783-835

\bibitem[Mor09a]{Mor09a} A.Moriwaki: Continuity of volumes on arithmetic varieties. J.
  of Alg. Geom. {\bf 18} (2009), no.~3, 407--457.

\bibitem[Mor09b]{Mor09} A.Moriwaki: Estimation of arithmetic linear series. Preprint (2009) arXiv:0902.1357.

\bibitem[Ok96]{Ok96} A.Okounkov: Brunn-Minkowski inequality for multiplicities. Invent. Math. {\bf 125} (1996), 405--411.

\bibitem[RLV00]{RLV00} R.Rumely, C.F.Lau, R.Varley: Existence of the sectional capacity. Mem. A.M.S. {\bf 145} (2000), no. 690.

\bibitem[WN09]{WN09} D.Witt Nystr\"om: Transforming metrics on a line bundle to the Okounkov body. Preprint (2009) arXiv:0903.5167.

\bibitem[Yua08]{Yua08} X.Yuan: On volumes of arithmetic line bundles. Preprint (2008) arXiv:0811.0226.

\bibitem[Yua09]{Yua09} X.Yuan: On volumes of arithmetic line bundles II. Preprint (2009) arXiv:0909.3680.

\bibitem[Zha95]{Zha95} S. Zhang: Positive line bundles on arithmetic varieties. J. of the AMS {\bf 8} no. 1 (1995), 187--221.

\end{thebibliography}
\end{document}